\newtheorem*{corollary*}{Corollary}
\newtheorem{theorem}{Theorem}[section]
\newtheorem{corollary}[theorem]{Corollary}
\newtheorem{lemma}[theorem]{Lemma}
\newtheorem{proposition}[theorem]{Proposition}
\newtheorem*{claim*}{Claim}
\theoremstyle{definition}
\newtheorem{definition}[theorem]{Definition}
\newtheorem{remark}[theorem]{Remark}
\newtheorem{example}[theorem]{Example}
\newtheorem{notation}[theorem]{Notation}
\numberwithin{equation}{theorem}
\def\mod{\operatorname{mod}}
\newcommand{\proj}{\operatorname{proj}} 
\newcommand{\CM}{\operatorname{CM}}
\def\sCM{\operatorname{\underline{CM}}}
\def\id{\operatorname{inj.dim}}
\def\gldim{\operatorname{gl.dim}}
\newcommand{\tridim}{\operatorname{tri.dim}}
\newcommand{\Xtridim}[1]{#1\mbox{-}\tridim}
\def\Db{\mathsf{D}^{\mathrm{b}}}
\newcommand{\C}{\mathsf{C}}
\newcommand{\add}{\operatorname{add}}
\newcommand{\res}{\operatorname{res}}
\def\Hom{\operatorname{Hom}}
\newcommand{\End}{\operatorname{End}}
\def\Ext{\operatorname{Ext}}
\newcommand{\Ker}{\operatorname{Ker}}
\newcommand{\gen}[3]{\langle #1{\rangle}_{#2}^{#3}}
\renewcommand{\AA}{{\mathcal A}}
\newcommand{\CC}{{\mathcal C}}
\newcommand{\TT}{{\mathcal T}}
\newcommand{\XX}{{\mathcal X}}
\newcommand{\YY}{{\mathcal Y}}
\newcommand{\FF}{{\mathcal F}}
\def\ZZ{\mathcal{Z}}
\newcommand{\Z}{\mathbb{Z}}
\def\Spec{\operatorname{Spec}}
\def\p{\mathfrak{p}}
\def\m{\mathfrak{m}}
\def\LoL{\operatorname{\ell\ell}}
\begin{document}
\setlength{\baselineskip}{15pt}
\title[Dimensions with respect to subcategories]{Dimensions of triangulated categories\\
with respect to subcategories}
\author[T. Aihara]{Takuma Aihara}
\address{(T. Aihara) Division of Mathematical Science and Physics, Graduate School of Science and Technology, 
Chiba University, Yayoi-cho, Chiba 263-8522, Japan}
\curraddr{Graduate School of Mathematics, Nagoya University, Furocho, Chikusaku, Nagoya 464-8602, Japan}
\email{aihara.takuma@math.nagoya-u.ac.jp}
\author[T. Araya]{Tokuji Araya}
\address{(T. Araya) Liberal Arts Division, Tokuyama College of Technology, Gakuendai, Shunan, Yamaguchi, 745-8585, Japan}
\curraddr{Department of Applied Science, Faculty of Science, Okayama University of Science, Ridaicho, Kitaku, Okayama 700-0005, Japan}
\email{araya@das.ous.ac.jp}
\author[O. Iyama]{Osamu Iyama}
\address{(O. Iyama) Graduate School of Mathematics, Nagoya University, Furocho, Chikusaku, Nagoya 464-8602, Japan}
\email{iyama@math.nagoya-u.ac.jp}
\urladdr{http://www.math.nagoya-u.ac.jp/~iyama/}
\author[R. Takahashi]{Ryo Takahashi}
\address{(R. Takahashi) Graduate School of Mathematics, Nagoya University, Furocho, Chikusaku, Nagoya 464-8602, Japan/Department of Mathematics, University of Nebraska, Lincoln, NE 68588-0130, USA}
\curraddr{Graduate School of Mathematics, Nagoya University, Furocho, Chikusaku, Nagoya 464-8602, Japan}
\email{takahashi@math.nagoya-u.ac.jp}
\urladdr{http://www.math.nagoya-u.ac.jp/~takahashi/}
\author[M. Yoshiwaki]{Michio Yoshiwaki}
\address{(M. Yoshiwaki) Osaka City University Advanced Mathematical Institute, 3-3-138 Sugimoto, Sumiyoshi-ku, Osaka 558-8585, Japan}
\email{yosiwaki@sci.osaka-cu.ac.jp}
\thanks{2010 {\em Mathematics Subject Classification.} Primary 18E30; Secondary 13D09, 16E35, 18A25}
\thanks{{\em Key words and phrases.} dimension of triangulated category, functor category, global dimension, resolving subcategory, cotilting module, Cohen-Macaulay module}
\thanks{The second author was partially supported by JSPS Research Activity Start-up 23840043. The third author was partially supported by JSPS Grant-in-Aid for Scientific Research 21740010, 21340003, 20244001 and 22224001. The fourth author was partially supported by JSPS Grant-in-Aid for Young Scientists (B) 22740008 and by JSPS Postdoctoral Fellowships for Research Abroad}
\begin{abstract}
This paper introduces a concept of dimension of a triangulated category with respect to a fixed full subcategory.
For the bounded derived category of an abelian category, upper bounds of the dimension with respect to a contravariantly finite subcategory and a resolving subcategory are given.
Our methods not only recover some known results on the dimensions of derived categories in the sense of Rouquier, but also apply to various commutative and non-commutative noetherian rings.
\end{abstract}
\maketitle
\section{Introduction}

The notion of dimension of a triangulated category was introduced by Rouquier \cite{R2} based on work of Bondal and Van den Bergh \cite{BV} on Brown representability.
It measures how many extensions are needed to build the triangulated category out of a single object, up to finite direct sum, direct summand and shift.
For results on the dimensions of triangulated categories, we refer to \cite{AT,BIKO,CYZ,KK,Yoshiw} for instance.

It is still a hard problem in general to give a precise value of the dimension of a given triangulated category.
The aim of this paper is to provide new information on this problem.
We give upper bounds of the dimensions of derived categories in terms of global dimensions.
A prototype of our approach is given by the inequality 
\begin{equation}\label{prototype}
\tridim\Db(\mod\Lambda)\leq\gldim\End_\Lambda(M)
\end{equation}
for a noetherian algebra $\Lambda$ and a generator $M$ of $\Lambda$ \cite[(3.4)]{KK}, 
where $\tridim\TT$ denotes the dimension of a triangulated category $\TT$.
This observation was applied to study representation dimension \cite{HIO,KK,O3,R1}.

Let $\TT$ be a triangulated category and $\XX$ a full subcategory.
This paper introduces and studies {\em the dimension
$$
\Xtridim{\XX}\TT
$$
of $\TT$ with respect to $\XX$}, which measures how many extensions are needed to build $\TT$ out of $\XX$, up to finite direct sum, direct summand and shift.
A similar notion called {\em level} was studied by Avramov, Buchweitz, Iyengar and Miller \cite{ABIM}; it is defined for each object of $\TT$.

In this paper, first we generalize the inequality \eqref{prototype} by replacing the right hand side with the global dimension of a functor category.
In representation theory, to study a category $\XX$ of representations of $\Lambda$ such as $\mod\Lambda$, $\CM(\Lambda)$ and $\Db(\mod\Lambda)$, the functor category $\mod\XX$ plays a crucial role \cite{AR}.
For example, it follows from a basic observation in Auslander-Reiten theory that projective resolutions of simple objects in $\mod\XX$ correspond to almost split sequences in $\XX$ \cite{ASS,Y}.
Our first main result is the following theorem.

\begin{theorem}\label{main result}
Let $\AA$ be an abelian category, and $\XX$ a contravariantly finite subcategory that generates $\AA$. 
Then
\[\Xtridim{\XX}\Db(\AA)\leq\gldim(\mod\XX).\]
\end{theorem}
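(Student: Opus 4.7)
My plan is to prove the theorem by first establishing the bound for single objects of $\AA$ via a Yoneda-type lifting argument, and then extending to arbitrary objects of $\Db(\AA)$. Set $n:=\gldim(\mod\XX)$ and assume $n<\infty$. The main tool is the restricted Yoneda functor $Y\colon\AA\to\mod\XX$, $A\mapsto\Hom_{\AA}(-,A)|_{\XX}$. Contravariant finiteness of $\XX$ implies $Y(A)\in\mod\XX$: a right $\XX$-approximation $X_0\to A$ together with a right $\XX$-approximation of $\ker(X_0\to A)$ produces a finite presentation of $Y(A)$. The generating hypothesis further forces every right $\XX$-approximation of an object of $\AA$ to be an epimorphism, because any epimorphism from an object of $\XX$ must factor through it.

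The core step is to lift a projective resolution
\[
0\to\Hom_{\XX}(-,X_n)\to\cdots\to\Hom_{\XX}(-,X_0)\to Y(A)\to 0
\]
of length at most $n$ in $\mod\XX$ to an $\XX$-resolution of $A$ in $\AA$. By Yoneda's lemma, projective objects in $\mod\XX$ are direct summands of representables, so the connecting arrows are induced by honest morphisms $X_i\to X_{i-1}$ in $\XX$, yielding a sequence $X_n\to\cdots\to X_0\to A$ in $\AA$. Exactness of the functor-category resolution, combined with the fact that the successive kernels are covered epimorphically by objects of $\XX$, forces the lifted sequence
\[
0\to X_n\to\cdots\to X_0\to A\to 0
\]
to be exact in $\AA$. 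Splitting this into short exact sequences and reading them as distinguished triangles in $\Db(\AA)$ shows that every $A\in\AA$ is built from $\XX$ in at most $n$ cones, i.e., lies in the $(n+1)$-st layer of the tower generated by $\XX$.

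The remaining and most delicate step is to pass from $A\in\AA$ to an arbitrary $C^\bullet\in\Db(\AA)$. The natural strategy is to glue the $\XX$-resolutions of the components of $C^\bullet$ into a Cartan--Eilenberg-type bounded double complex of objects of $\XX$, whose total complex is quasi-isomorphic to $C^\bullet$, and then to organize that total complex into a tower of at most $n$ triangles with outer terms in the shift-and-sum closure of $\XX$. I expect this to be the main obstacle: a naive filtration by stupid truncations yields a bound depending on the cohomological amplitude of $C^\bullet$, rather than the uniform bound $n$ demanded by the theorem. Overcoming this requires exploiting the fact that the resolution direction of the double complex has only $n+1$ rows, so that the tower can be collapsed into $n+1$ layers independently of the horizontal extent of $C^\bullet$, in the spirit of Rouquier's Koszul-type argument for the derived category of a regular affine scheme.
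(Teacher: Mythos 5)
Your first step---lifting a length-$n$ projective resolution of $Y(A)=\Hom_{\AA}(-,A)|_{\XX}$ in $\mod\XX$ to an exact sequence $0\to X_n\to\cdots\to X_0\to A\to0$ with $X_i\in\add\XX$---is sound, but it silently uses a nontrivial lemma: if a complex $L\to M\to N$ in $\AA$ becomes exact after applying $\Hom_{\AA}(X,-)$ for all $X\in\XX$, and $\XX$ generates $\AA$, then $L\to M\to N$ is already exact in $\AA$. This is true (one checks that the induced $L\to\Ker(M\to N)$ is forced to be an epimorphism by the generating hypothesis and left-exactness of $\Hom$), and you should state and prove it; it is precisely what makes the generating hypothesis do work.

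The genuine gap is the step you flag yourself: passing from $A\in\AA$ to an arbitrary $C^{\bullet}\in\Db(\AA)$. You do not carry it out, and the strategy you sketch does not work as stated. If you glue termwise $\XX$-resolutions into a Cartan--Eilenberg double complex with $n+1$ rows, the rows are complexes of objects of $\add\XX$ \emph{with nonzero horizontal differentials}, so the stupid row filtration does not produce $n+1$ layers lying in $\gen{\XX}{1}{}$; each row would itself still need to be decomposed. What actually works (this is the Krause--Kussin argument underlying Proposition~\ref{KK}, and it is how Theorem~\ref{main:resolving} proceeds) is an induction that peels off one syzygy layer at a time: given $C^{\bullet}$, one produces a triangle $N\to P\to C^{\bullet}\to N[1]$ with $P\in\gen{\proj}{1}{}$ and $\mathrm{H}^i N\cong\Omega(\mathrm{H}^i C^{\bullet})$, and iterates. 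You would need to redo this entire argument internally to the $\XX$-setting.

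The paper avoids this altogether by a different route: it constructs a left adjoint $\Phi$ to the Yoneda functor $\Psi\colon\AA\to\mod\XX$, shows that when $\XX$ generates $\AA$ the functor $\Phi$ is \emph{exact} and \emph{dense} with $\Phi\Psi\cong\mathrm{id}_{\AA}$ (Theorem~\ref{functor}), hence induces a dense triangle functor $\Db(\mod\XX)\to\Db(\AA)$. It then quotes Proposition~\ref{KK} to get $\Db(\mod\XX)=\gen{\Psi(\XX)}{d+1}{}$ and transports this along the dense functor using Lemma~\ref{dense}. This outsources the Cartan--Eilenberg machinery to the known case $\proj(\mod\XX)$ inside $\mod\XX$, instead of rebuilding it for $\XX\subseteq\AA$. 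Your proposal, if completed, would essentially reprove Krause--Kussin in the relative setting rather than transporting it; as submitted, the theorem is not proved.
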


\noindent
One can recover \eqref{prototype} by letting $\XX=\add M$.
For a cotilting module $T$ we apply this result to the full subcategory $\XX_T$ of $\mod\Lambda$ consisting of modules $X$ with $\Ext_\Lambda^{>0}(X,T)=0$ to get:

\begin{corollary}\label{main corollary}
Let $\Lambda$ be a noetherian ring and $T$ a cotilting module.
Then it holds that $\Xtridim{\XX_T}\Db(\mod\Lambda)\le\max\{1,\id T\}$.
\end{corollary}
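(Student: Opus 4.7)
\emph{Plan.} The natural approach is to derive the corollary from Theorem~\ref{main result} by specialising to $\AA=\mod\Lambda$ and $\XX=\XX_T$. It suffices to verify three points: (i) $\XX_T$ is contravariantly finite in $\mod\Lambda$, (ii) $\XX_T$ generates $\mod\Lambda$, and (iii) $\gldim(\mod\XX_T)\le\max\{1,\id T\}$.

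Set $n=\id T$. For (i) and (ii) I would invoke the cotilting approximation theorem of Auslander--Buchweitz: every $M\in\mod\Lambda$ fits into a short exact sequence
\[
0\to Y_M\to X_M\to M\to 0
\]
with $X_M\in\XX_T$ and $Y_M$ admitting a finite $\add T$-coresolution. The surjection $X_M\to M$ is a right $\XX_T$-approximation, so $\XX_T$ is contravariantly finite in $\mod\Lambda$; since $\Lambda\in\XX_T$, every $M$ is a quotient of an object of $\XX_T$, so $\XX_T$ generates.

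The essential work is (iii). The plan is: given any $F\in\mod\XX_T$, pick a projective presentation $\Hom_\Lambda(-,X_1)|_{\XX_T}\xrightarrow{f_\ast}\Hom_\Lambda(-,X_0)|_{\XX_T}\to F\to 0$ coming from $f\colon X_1\to X_0$ in $\XX_T$. Since $\XX_T$ is resolving (closed under kernels of epimorphisms), one can iterate this process by forming a right $\XX_T$-approximation of each successive syzygy. The crucial input is that each $X\in\XX_T$ admits an $\add T$-coresolution $0\to X\to T^0\to\cdots\to T^n\to 0$ of length at most $n$. Combined with the defining vanishing $\Ext_\Lambda^{>0}(X,T)=0$ for $X\in\XX_T$, this translates into an Ext-vanishing statement in $\mod\XX_T$ and forces the syzygy functor to become representable after at most $\max\{1,n\}$ steps, giving $\pd_{\mod\XX_T}F\le\max\{1,n\}$.

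With (i)--(iii) in hand, Theorem~\ref{main result} yields $\Xtridim{\XX_T}\Db(\mod\Lambda)\le\gldim(\mod\XX_T)\le\max\{1,\id T\}$. The main obstacle is (iii): the induction must convert the finiteness of $\id T$ into controlled vanishing of $\Ext$ in the functor category $\mod\XX_T$, and the borderline case $n=0$ (which forces $\Lambda$ to be self-injective and $\XX_T=\mod\Lambda$) must be treated separately and explains why the bound cannot drop below $1$.
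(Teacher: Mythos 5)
Your outline follows the paper's strategy for points (i) and (ii), but step (iii) is wrong, and this is a genuine gap. The correct bound for the global dimension of the functor category is
\[
\gldim(\mod\XX_T)\le\max\{2,\id T\},
\]
not $\max\{1,\id T\}$ (the paper cites \cite[(2.6.1)]{I2} for this). The lower threshold of~$2$ cannot in general be improved to~$1$: take $\Lambda$ a non-semisimple self-injective algebra and $T=\Lambda$, so that $\id T=0$ and $\XX_T=\mod\Lambda$; then $\gldim(\mod(\mod\Lambda))=2$, not~$1$. More generally, any non-semisimple additive category with pseudo-kernels has functor category of global dimension exactly~$2$. So your assertion that ``the borderline case $n=0\dots$ explains why the bound cannot drop below~$1$'' is the wrong conclusion; it cannot drop below~$2$, and feeding $\max\{2,\id T\}$ into Theorem~\ref{main result} gives the desired inequality only when $\id T\ge 2$.

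The paper therefore uses a \emph{second, separate argument} for the case $\id T\le 1$, which your plan omits entirely. Namely, when $\id T\le1$ every first syzygy $\Omega M$ satisfies $\Ext^{>0}_\Lambda(\Omega M,T)=\Ext^{>1}_\Lambda(M,T)=0$, so $\Omega(\mod\Lambda)\subseteq\XX_T$. Combining this with Proposition~\ref{sub proj}, which says $\Db(\AA)=\gen{\Omega\AA}{2}{}$ (equivalently $\Xtridim{\Omega\AA}\Db(\AA)\le1$), gives $\Xtridim{\XX_T}\Db(\mod\Lambda)\le1$ directly, bypassing the functor-category estimate. Without this extra step your argument does not establish the stated bound of $\max\{1,\id T\}$ for $\id T\in\{0,1\}$; it would only give~$2$.
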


\noindent
For example, when $R$ is a Cohen-Macaulay local ring with a canonical module, the above result induces an inequality
\begin{equation}\label{intro:CM}
\Xtridim{\CM(R)}\Db(\mod R)\leq\max\{1,\dim R\}.
\end{equation}
In particular, if $R$ has finite Cohen-Macaulay representation type, then the inequality $\tridim\Db(\mod R)\leq\max\{1,\dim R\}$ holds.

Next, we give another approach based on Cartan-Eilenberg resolutions in derived categories.
We prove the following theorem as our second main result.

\begin{theorem}\label{main2}
Let $\AA$ be an abelian category with enough projective objects.
Let $d\ge0$ be an integer, and $\XX$ a resolving subcategory containing the $d$-th syzygies of objects in $\AA$. 
Then one has 
$$
\Xtridim{\XX}\Db(\AA)\leq\max\{1,d\}.
$$
\end{theorem}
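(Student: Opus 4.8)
The goal is to show that $\Db(\AA)=\langle\XX\rangle_{\max\{2,d+1\}}$, equivalently that every object of $\Db(\AA)$ can be built from $\XX$ in at most $\max\{1,d\}$ triangles.

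The base of the whole argument is the case of a single object $A$ of $\AA$. As $\XX$ is resolving it contains $\proj\AA$, and $\Omega^{d}A\in\XX$ by hypothesis; splicing a projective resolution of $A$ at the $d$-th step yields short exact sequences $0\to\Omega^{j}A\to P_{j-1}\to\Omega^{j-1}A\to 0$ with $P_{j-1}$ projective, for $1\le j\le d$, each of which is a triangle in $\Db(\AA)$. Reading these off starting from $\Omega^{d}A\in\XX$ and going $d$ steps down gives $A=\Omega^{0}A\in\langle\XX\rangle_{d+1}$ (and $A\in\XX=\langle\XX\rangle_{1}$ in the degenerate case $d=0$, where $\XX$ necessarily equals $\AA$). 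The feature to keep in mind is that $A$ is reached in exactly $d$ triangles, uniformly in $A$; this is the analogue for the pair $(\AA,\XX)$ of the condition $\gldim\le d$, namely that every object of $\AA$ has an $\XX$-resolution of length $\le d$.

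For an arbitrary bounded complex $C$ I would carry out this construction degreewise and organise the bookkeeping with a Cartan--Eilenberg resolution. Fix a bounded representative $C^{\bullet}$ with terms in $\AA$; for each $i$ choose an $\XX$-resolution $0\to\Omega^{d}C^{i}\to P^{i}_{d-1}\to\cdots\to P^{i}_{0}\to C^{i}\to 0$ of length $d$, and assemble these into a Cartan--Eilenberg double complex $P^{\bullet\bullet}$: all entries in $\XX$, supported in the strip $-d\le j\le 0$ in the resolution direction $j$, with total complex $Y^{\bullet}$ quasi-isomorphic to $C$, and with the induced complexes of cocycles, coboundaries and horizontal cohomology again (truncated) $\XX$-resolutions of the corresponding objects attached to $C^{\bullet}$. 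One then filters $Y^{\bullet}$ in the direction $j$, obtaining a filtration with only $d+1$ (respectively $2$) steps, and argues --- using repeatedly that $\XX$ is closed under syzygies, extensions and kernels of epimorphisms, so that every cocycle, coboundary and cohomology module occurring in the resolution lies in $\XX$ --- that $C$ is obtained from objects of $\langle\XX\rangle_{1}$ in $d$ (respectively $1$) extensions. Combining the steps through $\langle\XX\rangle_{i}*\langle\XX\rangle_{j}\subseteq\langle\XX\rangle_{i+j}$ gives $C\simeq Y^{\bullet}\in\langle\XX\rangle_{\max\{2,d+1\}}$, which is the theorem; taking $\XX=\add M$ then recovers prototype-type inequalities such as \eqref{prototype}.

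The point I expect to be the real obstacle is precisely the collapse of this filtration to only $d+1$ steps: a priori each graded piece of the naive $j$-filtration is a genuine complex of $\XX$-objects whose width equals the cohomological amplitude of $C^{\bullet}$ --- not a formal complex --- so to see that it contributes only one extension one must use the full Cartan--Eilenberg package to strip the horizontal differentials off the rows one after another, identifying what remains with objects of $\XX$, while keeping track that the connecting maps tying consecutive rows together, which have degree at most $d$, are absorbed by the $d+1$ available layers. It is exactly here that the count becomes independent of the amplitude of $C$. Once this is arranged, the remaining ingredients --- the module case treated above, the existence of the Cartan--Eilenberg resolution, and the formal calculus of the subcategories $\langle\XX\rangle_{\bullet}$ --- are routine.
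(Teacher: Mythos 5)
Your proof is correct in outline, but it follows a genuinely different route from the paper's, and the step you flag as ``the real obstacle'' is indeed the crux and is left somewhat sketchy.

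The paper proceeds by induction on $d$. For $d\le 1$ it realizes $M$ as a bounded complex of projectives and exhibits an explicit triangle $Z\to M\to B\to Z[1]$ in which $Z$ is a direct sum of shifted kernels $Z^i$ and $B$ a direct sum of shifted images $B^i$ of the differential (Proposition~\ref{sub proj}); a short diagram chase through the exact sequences $0\to Z^{i-1}\to P^{i-1}\to B^i\to0$ and $0\to\Omega({\rm H}^iM)\to B^i\oplus Q\to Z^i\to0$ then shows all $Z^i,B^i\in\FF$, so $M\in\gen{\FF}{2}{}$. For $d\ge2$ one applies a single Cartan--Eilenberg step to produce a triangle $N\to P\to M\to N[1]$ with $P\in\gen{\proj\AA}{}{}$ and ${\rm H}^iN\cong\Omega({\rm H}^iM)$, and iterates $d-1$ times before invoking the $d=1$ case. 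Thus each CE step costs exactly one extension by a projective complex, and nothing finer than the split-projective structure of a single CE layer is needed.

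You instead assemble an entire CE double complex whose columns are length-$d$ $\XX$-resolutions, and filter the total complex by resolution degree, aiming to see $d+1$ graded pieces all in $\gen{\XX}{1}{}$. This works, but it puts the full weight on the structural collapse of each row, and the decisive point is not merely that cocycles, coboundaries and cohomology of rows lie in $\XX$: a bounded complex of $\XX$-objects whose subquotients lie in $\XX$ need not lie in $\gen{\XX}{1}{}$. What actually makes the rows land in $\gen{\XX}{1}{}$ is the horseshoe structure of the CE construction, namely that each term $Q^{i,j}$ splits as $Q^{i,j}_B\oplus Q^{i,j}_H\oplus Q^{i+1,j}_B$ and the horizontal differential is the split inclusion $Q^{i+1,j}_B\hookrightarrow Q^{i+1,j}$; hence each row is, on the nose, a direct sum of a contractible complex and a zero-differential complex with terms $Q^{i,j}_H$, and one checks that this direct-sum structure persists after truncating at $j=-d$ where the entries become syzygies $\Omega^dB^i,\Omega^dH^i\in\XX$. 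You gesture at this but do not carry it out; in particular the claim that ``the connecting maps \dots are absorbed by the $d+1$ available layers'' is not the right picture --- there are no extra connecting maps to absorb once the rows are recognized as split. If that verification is spelled out, your filtration argument gives $C\in\gen{\XX}{d+1}{}$ for $d\ge1$ in one pass, which is a tidy alternative to the paper's step-by-step reduction; the $d=0$ case reduces to $d=1$ since $\Omega^0\AA\subseteq\XX$ forces $\Omega^1\AA\subseteq\XX$ as well.
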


\noindent
This result also recovers the inequality \eqref{intro:CM}, removing the assumption that $R$ possesses a canonical module.
Furthermore, for a reduced affine algebra it enables us to reproduce the Krull dimension from the dimension of a derived category with respect to a subcategory:

\begin{corollary}
Let $R$ be a finitely generated reduced commutative algebra over a field. 
Then there is an equality
\[\dim R=\Xtridim{\operatorname{FR}(R)}\Db(\mod R),\]
where $\operatorname{FR}(R)$ denotes the full subcategory of $\mod R$ consisting of modules that are locally free on the regular locus of $R$.
\end{corollary}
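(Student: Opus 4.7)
The plan is to prove the upper bound via Theorem~\ref{main2} and the lower bound by tracking a residue field under localization at a regular closed point of maximal dimension.

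For the upper bound, I first check that $\operatorname{FR}(R)$ is a resolving subcategory of $\mod R$: it contains $R$, is closed under direct summands, and closure under extensions and under kernels of surjections reduces, after localizing at each regular prime $\p$, to the elementary fact that a short exact sequence of $R_\p$-modules two of whose terms are free has a free third term (use that $R_\p$ is regular local, so the sequence splits). Setting $d=\dim R$, I show $\operatorname{FR}(R)$ contains every $d$-th syzygy: for any finitely generated $M$ and any $\p$ in the regular locus, $R_\p$ is regular of Krull dimension at most $d$, so $(\Omega^d M)_\p$ is free. Theorem~\ref{main2} then yields $\Xtridim{\operatorname{FR}(R)}\Db(\mod R)\le\max\{1,d\}$. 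When $d=0$, $R$ is reduced artinian, hence a finite product of fields, so $\operatorname{FR}(R)=\mod R$ and both sides of the target equality vanish; for $d\ge 1$ the bound becomes $\le d$.

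For the lower bound, assume $d\ge 1$. Using that $R$ is Jacobson and that the singular locus of a reduced finitely generated algebra over a field is a proper closed subset of each irreducible component, I pick a closed maximal ideal $\m$ lying on a component of maximal dimension and outside the singular locus, so that $R_\m$ is regular local of Krull dimension $d$. Take $R/\m\in\Db(\mod R)$ as a test object. If $R/\m\in\langle\operatorname{FR}(R)\rangle_{n+1}$, applying the exact localization functor $(-)_\m\colon\Db(\mod R)\to\Db(\mod R_\m)$, which sends $\operatorname{FR}(R)$ into $\add R_\m$, places the residue field $R_\m/\m R_\m$ inside $\langle\add R_\m\rangle_{n+1}$. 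The classical ghost argument on the Koszul complex shows that this residue field, having projective dimension exactly $d$ over the regular local ring $R_\m$, does not lie in $\langle\add R_\m\rangle_d$, so $n\ge d$ and hence
\[\Xtridim{\operatorname{FR}(R)}\Db(\mod R)\ge d.\]

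The main obstacle is the lower bound: one must locate a regular closed point of maximal dimension when $R$ is not equidimensional (secured by the Jacobson property combined with the closedness and proper containment of the singular locus in each irreducible component), and one must invoke the level computation for the residue field over a regular local ring of Krull dimension $d$ with the indexing convention for $\langle-\rangle_n$ used in this paper.
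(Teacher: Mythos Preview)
Your proof is correct and follows essentially the same route as the paper. The paper packages the argument as Lemma~\ref{NP} and Proposition~\ref{affine}: the upper bound is exactly your application of Theorem~\ref{main2} to the resolving subcategory $\operatorname{FR}(R)=\operatorname{NP}^{-1}(\operatorname{Sing}R)$, and the lower bound is obtained by localizing at a regular maximal ideal $\m$ of height $d$ and invoking Proposition~\ref{KK} (the equality $\Xtridim{(\proj\AA)}\Db(\AA)=\gldim\AA$) in place of your direct Koszul/ghost computation for the residue field; the existence of such an $\m$ is secured in the paper by the same Jacobson-type argument you sketch.
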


\noindent
This corollary recovers a result of Rouquier \cite{R2}: if $R$ is a polynomial ring over a field, then $\dim R=\tridim\Db(\mod R)$ holds.
The notion of dimension of a triangulated category has been introduced as an attempt to formulate the Krull dimensions of rings and schemes in terms of the triangulated structure of their derived categories.
However, it turned out that the dimension of a derived category is often different from the Krull dimension.
We hope that the notion of dimension with respect to a subcategory which we introduce in this paper can achieve that goal. 
However the above result is itself still insufficient since the definition of $\operatorname{FR}(R)$ requires information on localizations of $R$-modules.

This paper is organized as follows.
In Section \ref{section:preliminary}, we give preliminary results.
In Section \ref{section:functor category}, we recall basic properties of functor categories and give our first main Theorem \ref{main result}.
In Section \ref{section:resolving subcategory}, we give an approach based on Cartan-Eilenberg resolutions in derived categories and prove our second main Theorem \ref{main2}.
In Section \ref{section:application}, we give several applications for Iwanaga-Gorenstein rings, commutative Cohen-Macaulay rings and finitely generated commutative algebras over fields.

\section{Preliminaries}\label{section:preliminary}

This section is devoted to stating some basic definitions and preliminary results. 
Throughout this paper, we assume that all rings are associative, noetherian and with identity, and that all subcategories are full and closed under isomorphisms.
A triangle in a triangulated category always means an exact triangle.
The following notation will be used.

\begin{notation}
Let $\AA$ be an additive category.
We denote by $\proj\AA$ the subcategory of $\AA$ consisting of projective objects of $\AA$.
For a subcategory $\XX$ of $\AA$, we denote by $\add\XX$ the smallest subcategory of $\AA$ containing $\XX$ that is closed under finite direct sums and direct summands.
We denote by $\C^{\rm b}(\AA)$ the category of bounded complexes of objects of $\AA$.
When $\AA$ is abelian, the bounded derived category of $\AA$ is denoted by $\Db(\AA)$. 
For a ring $\Lambda$, let $\mod\Lambda$ denote the category of finitely generated right $\Lambda$-modules.
We simply write $\proj\Lambda$ instead of $\proj(\mod\Lambda)$.
For a commutative Cohen-Macaulay local ring $R$, we denote the category of (maximal) Cohen-Macaulay $R$-modules by $\CM(R)$.
\end{notation}

Let $\TT$ be a triangulated category and $\XX,\YY$ be subcategoies of $\TT$.
We denote by $\XX*\YY$ the subcategory of $\TT$ consisting of objects $M$ that admit triangles
$
X\to M\to Y\to X[1]
$
with $X\in \XX$ and $Y\in \YY$.
Then $(\XX*\YY)*\ZZ=\XX*(\YY*\ZZ)$ holds by octahedral axiom. 
Set $\gen{\XX}{}{}:=\add\{X[i]\ |\ X\in\XX, i\in\Z \}$. 
For a positive integer $n$, let 
\[\gen{\XX}{n}{\TT}=\gen{\XX}{n}{}:=\add(\underbrace{\gen{\XX}{}{}*\gen{\XX}{}{}*\cdots*\gen{\XX}{}{}}_{n}).\]
Clearly $\gen{\XX}{n}{}$ is closed under shifts.

The concept of dimension of a triangulated category was introduced by Rouquier \cite{R2}.
Now we define a version of Rouquier's dimension relative to a fixed full subcategory.

\begin{definition}\label{dtcat}
For a subcategory $\XX$ of $\TT$, 
we define 
\[\Xtridim{\XX}\TT:=\inf\{n\geq0\ |\ \TT=\gen{\XX}{n+1}{}\}.\]
\end{definition}

\begin{remark}
\begin{enumerate}
\item Our relative dimension has the following relationship with the notion of {\em level}, defined in \cite{ABIM}:
$$
\Xtridim{\XX}\TT=\sup_{M\in\TT}\left\{\mathsf{level}_\TT^\XX(M)\right\}-1.
$$

\item The {\em (triangle) dimension} $\tridim \TT$ defined by Rouquier is
described in terms of our relative dimension, as follows:
$$
\tridim \TT = \inf_{M\in\TT}\{\Xtridim{(\add M)}\TT\}.
$$
\end{enumerate}
\end{remark}


It is often that $\tridim\Db(\AA)$ is called the {\em derived dimension} of $\AA$.
For a ring $\Lambda$, we call the dimension $\tridim\Db(\mod\Lambda)$ the {\em derived dimension} of $\Lambda$.

The following easy observation will be used later.

\begin{lemma}\label{dense}
Let $F:\TT\to\TT'$ be a dense triangle functor of triangulated categories. 
Let $\XX$ be a subcategory of $\TT$ and $n\geq0$. 
If $\TT=\gen{\XX}{n}{}$, then $\TT'=\gen{F\XX}{n}{}$. 
\end{lemma}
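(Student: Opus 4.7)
The statement is essentially a naturality assertion: a dense triangle functor transports building-in-$n$-steps from $\XX$ to building-in-$n$-steps from $F\XX$. The plan is to proceed by induction on $n$, after first unpacking what density plus the triangle-functor axioms give us at the level of each constructor in the definition of $\gen{\XX}{n}{}$.

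As a preparatory step, I would record the following closure property of $F$: for any subcategory $\YY \subseteq \TT$, $F$ sends $\gen{\YY}{}{\TT}$ into $\gen{F\YY}{}{\TT'}$, because $F$ is additive (so preserves finite direct sums and direct summands) and commutes with shift (so $F(Y[i]) \cong (FY)[i]$). Similarly, for subcategories $\YY,\ZZ \subseteq \TT$, $F(\YY * \ZZ) \subseteq F\YY * F\ZZ$, since any triangle $Y \to M \to Z \to Y[1]$ in $\TT$ is sent by $F$ to a triangle $FY \to FM \to FZ \to (FY)[1]$ in $\TT'$.

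Now for the induction. The base case $n = 1$ is immediate from the first observation above. For the inductive step, suppose the statement holds for $n-1$ and that $\TT = \gen{\XX}{n}{}$. Given $M' \in \TT'$, density furnishes an object $M \in \TT$ with $F M \cong M'$. By hypothesis $M$ is a direct summand of some object built as $n$ iterated $*$-extensions from $\gen{\XX}{}{}$, which we may write as $N \in \gen{\XX}{n-1}{} * \gen{\XX}{}{}$. Fix a triangle $X \to N \to Y \to X[1]$ with $X \in \gen{\XX}{n-1}{}$ and $Y \in \gen{\XX}{}{}$. Applying $F$ produces a triangle $FX \to FN \to FY \to (FX)[1]$; by the inductive hypothesis applied to the restricted functor (or directly by the preparatory observations), $FX \in \gen{F\XX}{n-1}{\TT'}$ and $FY \in \gen{F\XX}{}{\TT'}$, so $FN \in \gen{F\XX}{n}{\TT'}$. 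Since $FM$ is a direct summand of $FN$ and $\gen{F\XX}{n}{}$ is closed under summands, we conclude $M' \cong FM \in \gen{F\XX}{n}{\TT'}$.

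There is no real obstacle here; the only point requiring a moment's care is to observe that the inductive hypothesis is being used on an object of $\TT$ (namely $X$) and not on $\TT'$, which is fine because the statement ``$F$ sends $\gen{\XX}{k}{\TT}$ into $\gen{F\XX}{k}{\TT'}$'' is exactly what the induction establishes and does not itself require density; density is only invoked at the outset to reduce an arbitrary object of $\TT'$ to one of the form $FM$.
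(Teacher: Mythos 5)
The paper states this lemma as an ``easy observation'' and supplies no proof, so there is nothing of the paper's to compare against; your argument is the standard one and is correct. The cleanest way to organize it --- which you essentially arrive at in your closing paragraph --- is to separate the lemma into two pieces: first show, by induction on $k$ and without any density hypothesis, that $F\bigl(\gen{\XX}{k}{\TT}\bigr)\subseteq\gen{F\XX}{k}{\TT'}$ (using that a triangle functor is additive, commutes with shift, and carries triangles to triangles, together with the containment $\add\UU*\add\VV\subseteq\add(\UU*\VV)$); then invoke density exactly once to lift an arbitrary $M'\in\TT'$ to some $M\in\TT=\gen{\XX}{n}{}$ and conclude $M'\cong FM\in\gen{F\XX}{n}{\TT'}$. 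Your writeup briefly muddies this by phrasing the induction as being ``on the statement'' --- which would illegitimately require $\TT=\gen{\XX}{n-1}{}$ --- but you correctly flag and repair this in the final paragraph, so the logic as a whole is sound.
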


Let $\AA$ be an abelian category with enough projective objects. 
As usual, the projective dimension of an object $M\in\AA$ is the infimum of the integers $d$ 
such that there exists an exact sequence $0\to P_d\to P_{d-1}\to\cdots\to P_0\to M\to0$
with $P_i\in\proj\AA$ for all $i$. 
We denote by $\gldim\AA$ the supremum of the projective dimensions of objects of $\AA$ and call it the {\em global dimension} of $\AA$.

The result below gives a relationship between global dimension and derived dimension, which follows from \cite[(2.4) and (2.5)]{KK}.

\begin{proposition}\label{KK}
Let $\AA$ be an abelian category with enough projective objects. 
Then one has $\Xtridim{(\proj\AA)}\Db(\AA)=\gldim\AA$.
\end{proposition}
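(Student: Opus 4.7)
The plan is to establish the equality by proving both inequalities separately.

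For the lower bound $\Xtridim{(\proj\AA)}\Db(\AA) \geq \gldim\AA$, I would choose $M \in \AA$ realizing the projective dimension $d := \gldim\AA$ (or, if $d = \infty$, with arbitrarily large projective dimension). The crux is the following observation: membership $M \in \gen{\proj\AA}{n}{\Db(\AA)}$ forces $M$ to be a direct summand of an object built from sums of shifted projectives via $n$ successive extensions, and hence quasi-isomorphic to a bounded complex of projectives of minimal width at most $n$; restricting attention to $M$ as an object concentrated in a single degree of $\AA$, this forces $\pd_{\AA} M \leq n - 1$. Taking $n = d$ yields a contradiction in the finite case, and the infinite case is handled by noting that any $M$ of infinite projective dimension is not a perfect complex and so cannot lie in any $\gen{\proj\AA}{n}{}$.

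For the upper bound $\Xtridim{(\proj\AA)}\Db(\AA) \leq \gldim\AA$, the only nontrivial case is $d := \gldim\AA < \infty$. Given $X \in \Db(\AA)$, I would construct a Cartan-Eilenberg projective resolution: a bounded double complex $P^{*,*}$ with projective entries whose totalization is quasi-isomorphic to $X$ in $\Db(\AA)$, with each column $P^{p,\bullet}$ a projective resolution of length at most $d+1$. Filtering the totalization by the resolution direction produces a $(d+1)$-step filtration in $\Db(\AA)$; under a careful choice of resolution, each successive graded piece can be arranged to be a direct sum of shifted projectives, and hence to belong to $\gen{\proj\AA}{}{\Db(\AA)}$. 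Iterated extensions of such pieces place $X$ in $\gen{\proj\AA}{d+1}{\Db(\AA)}$.

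The main obstacle will be verifying that each graded subquotient of the filtration genuinely decomposes as a direct sum of shifted projectives in $\Db(\AA)$; this depends delicately on the particular choice of Cartan-Eilenberg resolution and on the vanishing of suitable higher extensions between projectives, and is precisely the content of \cite[(2.4) and (2.5)]{KK}. Combining the two bounds then yields the desired equality $\Xtridim{(\proj\AA)}\Db(\AA) = \gldim\AA$.
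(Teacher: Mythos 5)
Your upper-bound argument is essentially the one behind the cited result \cite{KK} and reappears in this paper's own proof of Theorem~\ref{main:resolving}: take a bounded Cartan--Eilenberg projective resolution with columns of length at most $d=\gldim\AA$, filter the total complex by the resolution (vertical) degree, and observe that each row, being a complex of projectives whose cycles, boundaries and cohomologies are again projective, splits and is therefore a direct sum of shifted projectives; the $(d+1)$-step filtration then places every bounded complex in $\gen{\proj\AA}{d+1}{}$. You correctly identify the splitting of the rows as the essential technical point, and it does hold thanks to the defining properties of a Cartan--Eilenberg resolution. That half of the proposal is sound and matches the cited source.

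The lower-bound sketch, however, contains a genuine gap. The step ``$M\in\gen{\proj\AA}{n}{}$ forces $M$ to be quasi-isomorphic to a bounded complex of projectives of minimal width at most $n$'' is not an argument. Read as a statement about amplitude it is false: already $P\oplus P[10]$ lies in $\gen{\proj\AA}{1}{}$, yet no quasi-isomorphic complex is concentrated in a window of size $1$. Read for $M$ concentrated in a single degree as ``$M$ admits a projective resolution of length at most $n-1$'', it is exactly the conclusion $\pd M\le n-1$ you want to prove, so the step is circular. What is actually needed is a ghost-lemma (equivalently, $\Ext$-detection) argument: if $\pd M=d$, choose $N$ with $\Ext^d_\AA(M,N)\ne0$; the corresponding nonzero morphism $M\to N[d]$ in $\Db(\AA)$ factors as a composite of $d$ morphisms, each a $\gen{\proj\AA}{}{}$-ghost (namely the syzygy connecting maps $\Omega^iM\to\Omega^{i+1}M[1]$ followed by a class in $\Ext^1(\Omega^{d-1}M,N)$), and the ghost lemma then gives $M\notin\gen{\proj\AA}{d}{}$, whence $\Xtridim{(\proj\AA)}\Db(\AA)\ge d$. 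Something along these lines has to be supplied; the ``width'' heuristic does not substitute for it, and without it the lower-bound direction of the proposal is incomplete.
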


We end this section with a useful information that sharpens a result in \cite{Han}.
Let $\AA$ be an abelian category with enough projective objects and $\CC$ a subcategory of $\AA$.
For a positive integer $n$, we denote by $\Omega^n\CC$ the subcategory of $\AA$ which consists of objects $X$ admitting an exact sequence $0\to X\to P_{n-1}\to\cdots\to P_0\to C\to0$ with $C\in\CC$ and $P_i\in\proj\AA$ for all $i$.
Note that $\Omega^n\CC$ contains $\proj\AA$.

\begin{proposition}\label{sub proj}
Let $\AA$ be an abelian category with enough projective objects. 
For every $M\in\Db(\AA)$, 
there exists a triangle $Z\to M\to B\to K[1]$ 
with $Z\in\gen{\Omega^2\AA}{}{}$ and $B\in\gen{\Omega\AA}{}{}$. 
In particular, one has $\Db(\AA)=\gen{\Omega\AA}{2}{}$. 
\end{proposition}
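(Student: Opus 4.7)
The plan is to first represent $M$ by a bounded complex whose terms are projective except possibly at the leftmost position, and then apply the classical cycle--image decomposition to that complex. The two pieces of this decomposition will naturally land in $\gen{\Omega^2\AA}{}{}$ and $\gen{\Omega\AA}{}{}$, essentially because in a complex of projectives each kernel $\Ker(d^i)$ is a second syzygy of the next cokernel, while each image $\Im(d^i)$ is a first syzygy.

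I would begin by constructing the representing complex. Since $\AA$ has enough projectives, $M\in\Db(\AA)$ admits a bounded-above projective resolution $P^\bullet\to M$ with $P^i=0$ for $i>b$; say the cohomology of $M$ is concentrated in degrees $[-a,b]$. For any $N\geq a+1$, $P^\bullet$ is exact at every degree less than $-a$, so $K:=\Im(P^{-N-1}\to P^{-N})$ coincides with $\Ker(P^{-N}\to P^{-N+1})$. Form the bounded complex
\[
M':=\bigl(K\hookrightarrow P^{-N}\to P^{-N+1}\to\cdots\to P^b\bigr),
\]
with $K$ placed in degree $-N-1$. The natural chain map $P^\bullet\to M'$ (identity in degrees $\geq -N$, the surjection $P^{-N-1}\twoheadrightarrow K$ in degree $-N-1$, and zero below) is a quasi-isomorphism by direct cohomology computation, so $M'\cong M$ in $\Db(\AA)$. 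Extracting the initial segment
\[
0\to K\to P^{-N}\to\cdots\to P^{-a}\to\Coker(P^{-a-1}\to P^{-a})\to 0
\]
of the resolution then exhibits $K\in\Omega^{N-a+1}\AA\subseteq\Omega^2\AA$.

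Next I would apply the cycle--image decomposition to $M'$. Let $Z\subseteq M'$ be the subcomplex with $Z^i:=\Ker(d^i_{M'})$; its differential vanishes automatically, and the quotient $B:=M'/Z$ satisfies $B^i\cong\Im(d^i_{M'})$ with zero induced differential (since $d\circ d=0$). The short exact sequence $0\to Z\to M'\to B\to 0$ yields a triangle $Z\to M\to B\to Z[1]$ in $\Db(\AA)$, and, since $Z$ and $B$ have trivial differentials, they split as $Z\cong\bigoplus_iZ^i[-i]$ and $B\cong\bigoplus_iB^i[-i]$.

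Finally I would check membership termwise. For $B^i$ with $i\neq -N-1$, the embedding $B^i\hookrightarrow M'^{i+1}$ has projective target and cokernel in $\AA$, so $B^i\in\Omega\AA$; and $B^{-N-1}=K\in\Omega^2\AA\subseteq\Omega\AA$. For $Z^i$ the cases are: $Z^{-N-1}=0$ (since $K\hookrightarrow P^{-N}$ is injective), $Z^{-N}=K\in\Omega^2\AA$, $Z^b=P^b\in\proj\AA\subseteq\Omega^2\AA$, and for $-N<i<b$ the exact sequence $0\to Z^i\to P^i\to P^{i+1}\to P^{i+1}/\Im(d^i)\to 0$ of two consecutive projectives displays $Z^i\in\Omega^2\AA$. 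The ``in particular'' assertion then follows from $\Omega^2\AA\subseteq\Omega\AA$. I expect the main obstacle to be the first step, namely choosing the truncation depth $N$ correctly and verifying that the leftmost term $K$ of $M'$ really lies in $\Omega^2\AA$; once this is settled, the cycle--image bookkeeping is purely formal.
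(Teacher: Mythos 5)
Your argument is correct and follows essentially the same route as the paper's proof: represent $M$ by a bounded complex that is projective except for a leftmost term lying in $\Omega^2\AA$, then split it via the cycle--image short exact sequence $0\to Z\to M'\to B\to 0$ and decompose $Z$ and $B$ termwise since they have zero differential. The only cosmetic difference is at the left edge: the paper duplicates the leftmost kernel $Z^a$ in the subcomplex (creating a contractible two-term piece that cancels in $\Db(\AA)$), whereas you take the honest cycle subcomplex, so your $Z$ and $B$ each carry one extra summand isomorphic to $K$; since $K\in\Omega^2\AA\subseteq\Omega\AA$, this does not affect the conclusion.
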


\begin{proof}
Let $M$ be an object of $\Db(\AA)$. 
We can regard $M$ as a right bounded complex $(\cdots\to P^{i}\xrightarrow{d^i}\cdots\xrightarrow{d^{b-1}}P^b\xrightarrow{d^b}0)$ of objects in $\proj\AA$. 
There exists an integer $a$ such that ${\rm H}^iM=0$ for all $i\leq a$. 
For each $i\in\Z$, let $Z^i$ be the kernel of $d^i$. 
Using truncation, we can assume $M=(0\to Z^a\to P^a\to\cdots\to P^b\to0)$. 
Consider the monomorphism
\[\xymatrix{
Z \ar@{=}[r] \ar[d]_{f} & (0 \ar[r] & Z^a \ar[r]^{=} \ar[d]^{=} & Z^a \ar[r]^{0}\ar[d] & Z^{a+1} \ar[r]^{0}\ar[d] & \cdots \ar[r]^{0} 
& Z^{b-1} \ar[r]^{0}\ar[d] & Z^b \ar[r] \ar[d]^{=} & 0) \\ 
M \ar@{=}[r] & (0 \ar[r] & Z^a \ar[r] & P^a \ar[r]^{d^a} & P^{a+1} \ar[r]^{d^{a+1}} & \cdots \ar[r]^{d^{b-2}} 
& P^{b-1} \ar[r]^{d^{b-1}} & P^b \ar[r]^{d^b} & 0)
}\]
in $\C^{\rm b}(\AA)$. 
The cokernel of $f$ in $\C^{\rm b}(\AA)$ has the form
\[B:=(0\to B^a\xrightarrow{0} B^{a+1}\xrightarrow{0}\cdots\xrightarrow{0} B^{b-1}\to0),\]
where $B^{i+1}$ is the image of $d^i$ for all $i$. 
Therefore we get a triangle 
\[Z\xrightarrow{f} M\to B\to Z[1]\]
in $\Db(\AA)$. 
We observe that $Z\cong\bigoplus_{i=a+1}^{b}Z^i[-i]$ and $B\cong\bigoplus_{i=a}^{b-1}B^i[-i]$ in $\Db(\AA)$, and that $Z^i\in\Omega^2\AA$ and $B^i\in\Omega\AA$ for all $i$.
Now the proof is completed. 
\end{proof}

\section{Functor categories}\label{section:functor category}

In this section, an upper bound of triangulated dimension is given in terms of the global dimension of a functor category. 

Let $\XX$ be an additive category. 
An $\XX$-module is an additive contravariant functor from $\XX$ to the category of abelian groups. 
A morphism between $\XX$-modules is a natural transformation. 
For any object $X\in\XX$, the functor $\Hom_\XX(-,X)$ is an $\XX$-module. 
We say that an $\XX$-module $F$ is \emph{finitely presented} if there is an exact sequence $\Hom_\XX(-,X_1)\to\Hom_\XX(-,X_0)\to F\to0$ with $X_0,X_1\in\XX$ \cite{ASS,Y}. 
The category of finitely presented $\XX$-modules is denoted by $\mod\XX$. 
The assignment $X\mapsto\Hom_\XX(-,X)$ makes a fully faithful functor $\XX\to\mod\XX$, called the \emph{Yoneda embedding} of $\XX$. 

We recall here a well-known criterion for $\mod\XX$ to be abelian. 
Let $\XX$ be an additive category and $f:X\to Y$ be a morphism in $\XX$. 
A morphism $g:Z\to X$ in $\XX$ is called a \emph{pseudo-kernel} if $\Hom_\XX(-,Z)\to\Hom_\XX(-,X)\to\Hom_\XX(-,Y)$ is exact on $\XX$. 
We say that $\XX$ \emph{has pseudo-kernels} if all morphisms in $\XX$ have pseudo-kernels.
%
The category $\mod\XX$ is an abelian category 
if and only if $\XX$ has pseudo-kernels \cite{A1}. 

Now we describe a class of additive categories having pseudo-kernels.
We say that a subcategory $\XX$ of an additive category $\AA$ is \emph{contravariantly finite} if for any object $M\in\AA$ there exist $X\in\XX$ and a morphism $f:X\to M$ such that $\Hom_\AA(X',f)$ is surjective for all $X'\in\XX$ \cite{AS}.
Dually, we define a \emph{covariantly finite} subcategory.
A covariantly and contravariantly finite subcategory is called a \emph{functorially finite} subcategory.

\begin{example}
Let $\AA$ be an additive category and $\XX$ be a contravariantly finite subcategory of $\AA$. 
If $\AA$ has pseudo-kernels, 
then $\XX$ also has pseudo-kernels.
Hence if $\AA$ is an abelian category, 
then so is $\mod\XX$. 
\end{example}

Let $\AA$ be an abelian category and $\XX$ be a subcategory of $\AA$. 
We say that $\XX$ \emph{generates} $\AA$ if for any object $M$ of $\AA$ there is an epimorphism $X\to M$ with $X\in\XX$. 

\begin{definition}
Let $\AA$ be an abelian category and $\XX$ be a contravariantly finite subcategory of $\AA$.
We denote by $\Psi:\AA\to\mod\XX$ the functor which sends $M\in\AA$ to $\Hom_\AA(-,M)$.
For any $F\in\mod\XX$, take a projective presentation 
\begin{equation}\label{projective present}
\Psi(X_1)\xrightarrow{\Psi(\alpha)}\Psi(X_0)\to F\to0
\end{equation}
of $F$ with $X_0,X_1\in\XX$. 
Then define $\Phi(F)$ by an exact sequence 
\begin{equation}\label{definition of phi}
X_1\xrightarrow{\alpha}X_0\to \Phi(F)\to0
\end{equation}
in $\AA$.
This makes a functor $\Phi:\mod\XX\to\AA$.
\end{definition}

\begin{remark}
Note that $\Phi(F)$ does not depend on the choice of projective presentations of $F$. 
Indeed, take another projective presentation
\[\Psi(Y_1)\xrightarrow{\Psi(\beta)}\Psi(Y_0)\to F\to0\]
of $F$ with $Y_0,Y_1\in\XX$. 
Since $\Psi(X_0),\Psi(X_1),\Psi(Y_0)$ and $\Psi(Y_1)$ are projective, 
there is a commutative diagram 
\[\xymatrix{
\Psi(X_1) \ar[r]_{\Psi(\alpha)}\ar[d] & \Psi(X_0) \ar[d]^{\Psi(\gamma)} \\
\Psi(Y_1) \ar[r]_{\Psi(\beta)}\ar[d] & \Psi(Y_0) \ar[d]^{\Psi(\gamma')} \\
\Psi(X_1) \ar[r]_{\Psi(\alpha)} & \Psi(X_0) 
}\]
of morphisms in $\mod\XX$. 
It is easy to see that $\Psi(1_{X_0}-\gamma'\gamma)$ factors through $\Psi(\alpha)$. 
We have a commutative diagram 
\[\xymatrix{
X_1 \ar[r]_{\alpha}\ar[d] & X_0 \ar[d]^{\gamma} \ar[r]_(0.4){\pi} & \Phi(F) \ar[r]\ar[d]^{f} & 0 \\
Y_1 \ar[r]_{\beta}\ar[d] & Y_0 \ar[d]^{\gamma'} \ar[r] & M \ar[r]\ar[d]^{g} & 0 \\
X_1 \ar[r]_{\alpha} & X_0 \ar[r]_(0.4){\pi} & \Phi(F) \ar[r] & 0 
}\]
of morphisms in $\AA$ with exact rows. 
By Yoneda's lemma, $1_{X_0}-\gamma'\gamma$ factors through $\alpha$. 
This implies $0=\pi(1_{X_0}-\gamma'\gamma)=(1_{\Phi(F)}-gf)\pi$. 
Since $\pi$ is an epimorphism, 
we observe $gf=1_{\Phi(F)}$. 
A similar argument shows that $M$ is isomorphic to $\Phi(F)$. 
\end{remark}

The following theorem is crucial to prove the main result of this section. 

\begin{theorem}\label{functor}
Let $\AA$ be an abelian category, and let $\XX$ be a contravariantly finite subcategory of $\AA$.
Then the following hold.
\begin{enumerate}[{\rm (1)}]
\item
$(\Phi,\Psi)$ is an adjoint pair.
In particular, $\Phi$ is right exact and $\Psi$ is left exact.
\item
If $\XX$ generates $\AA$, then $\Phi$ is dense and exact, and $\Phi\Psi$ is isomorphic to the identity functor.
\end{enumerate}
\end{theorem}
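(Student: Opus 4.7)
The plan is to prove (1) first, and then the density and exactness halves of (2) in turn.

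For (1), I would build the adjunction $\Hom_\AA(\Phi F, M) \cong \Hom_{\mod\XX}(F, \Psi M)$ by realizing both sides as the same kernel. Starting from the projective presentation $\Psi(X_1) \xrightarrow{\Psi\alpha} \Psi(X_0) \to F \to 0$, applying $\Hom_{\mod\XX}(-, \Psi M)$ and invoking Yoneda $\Hom_{\mod\XX}(\Psi X, \Psi M) \cong \Hom_\AA(X, M)$ identifies $\Hom_{\mod\XX}(F, \Psi M)$ with $\Ker(\Hom_\AA(X_0, M) \to \Hom_\AA(X_1, M))$. On the other side, applying $\Hom_\AA(-, M)$ to $X_1 \xrightarrow{\alpha} X_0 \to \Phi(F) \to 0$ in $\AA$ identifies $\Hom_\AA(\Phi F, M)$ with the same kernel. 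Naturality in $F$ and $M$ gives the adjunction, whence right-exactness of $\Phi$ and left-exactness of $\Psi$ are automatic.

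For the density half of (2), I target the stronger statement $\Phi\Psi \cong \mathrm{id}_\AA$. For any $M \in \AA$, the generation hypothesis forces iterated right $\XX$-approximations to be epimorphisms, producing an exact sequence $X_1 \to X_0 \to M \to 0$ in $\AA$. Exactness of $\Psi(X_1) \to \Psi(X_0) \to \Psi(M) \to 0$ at $\Psi(M)$ is precisely the defining property of the approximation $X_0 \to M$, and exactness at $\Psi(X_0)$ follows because any morphism $X \to X_0$ killed in $M$ factors through $\Ker(X_0 \to M)$ and hence, by approximation, through $X_1$. Thus this is a presentation of $\Psi(M)$, and so $\Phi\Psi(M) = \Coker_\AA(X_1 \to X_0) = M$.

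For the exactness half of (2), right-exactness is free; the issue is preservation of monomorphisms. Given $0 \to F \to G \to H \to 0$ in $\mod\XX$, I would pick presentations $\Psi(P_1) \to \Psi(P_0) \to F$ and $\Psi(Q_1) \to \Psi(Q_0) \to H$, and apply the horseshoe lemma (valid since $\mod\XX$ is abelian with enough projectives) to produce a presentation of $G$ whose differential is $\begin{pmatrix}\alpha & \epsilon \\ 0 & \gamma\end{pmatrix}$ for some connecting morphism $\epsilon\colon Q_1 \to P_0$. Applying $\Phi$ yields a short exact sequence of two-term complexes in $\AA$, and the associated long exact sequence on kernels and cokernels reads
\[\Ker(\gamma) \xrightarrow{\delta} \Phi(F) \to \Phi(G) \to \Phi(H) \to 0,\]
with $\delta$ induced by $\epsilon$. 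What remains is to prove $\delta = 0$.

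The hard part will be this vanishing, and it is where the generation hypothesis earns its keep. From the horseshoe, $\epsilon$ satisfies $i \circ p \circ \Psi\epsilon = -\tilde r \circ \Psi\gamma$ in $\mod\XX$, where $i\colon F \hookrightarrow G$, $p\colon \Psi(P_0) \twoheadrightarrow F$, and $\tilde r\colon \Psi(Q_0) \to G$ lifts the presentation of $H$. Since $\XX$ generates $\AA$, I can choose an epimorphism $\phi'\colon X \twoheadrightarrow \Ker(\gamma)$ with $X \in \XX$; letting $\phi\colon X \to Q_1$ be the composition with the inclusion, one has $\gamma\phi = 0$. Applying the defining relation to $\Psi(\phi)$ then gives $i \circ p \circ \Psi(\epsilon\phi) = 0$, so cancelling the monomorphism $i$ yields $p \circ \Psi(\epsilon\phi) = 0$. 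By projectivity of $\Psi(X)$ and exactness of $\Psi(P_1) \to \Psi(P_0) \to F$, $\Psi(\epsilon\phi)$ factors through $\Psi\alpha$, and Yoneda transports this factorization to $\AA$: $\epsilon\phi = \alpha\psi$ for some $\psi\colon X \to P_1$. Since $\phi'$ is epi, $\epsilon(\Ker(\gamma)) = (\epsilon\phi)(X) \subseteq \Im(\alpha)$, forcing $\delta = 0$ as required.
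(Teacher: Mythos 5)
Your proof is correct. For part (1) and for the density and $\Phi\Psi\cong\mathrm{id}_\AA$ portions of (2), your arguments coincide with the paper's: Yoneda identifies both $\Hom$-sets with the same kernel, and the generation hypothesis forces right $\XX$-approximations to be epimorphisms, so that $\Psi(M)$ admits a presentation coming from an $\AA$-exact sequence $X_1\to X_0\to M\to 0$.

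For the exactness of $\Phi$, however, you take a genuinely different route. The paper first proves a reflection lemma: if $\Psi(L)\to\Psi(M)\to\Psi(N)$ is exact in $\mod\XX$, then $L\to M\to N$ is exact in $\AA$ (generation makes the induced map $L\to\Ker(M\to N)$ an epimorphism). Given a monomorphism $f\colon F\to G$, the paper then lifts $f$ to a two-term presentation of $F$ and a three-term resolution of $G$, shows that the mapping-cone sequence $\Psi(X_1\oplus Y_2)\to\Psi(X_0\oplus Y_1)\to\Psi(Y_0)$ is exact precisely because $f$ is monic, applies the reflection lemma to obtain the analogous exact sequence in $\AA$, and finishes with a diagram chase. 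You instead complete $f$ to a short exact sequence $0\to F\to G\to H\to 0$, run a two-term horseshoe to build compatible presentations, apply $\Phi$ to the resulting degreewise split short exact sequence of complexes, and localize the obstruction to mono-preservation in the single connecting map $\delta\colon\Ker\gamma\to\Phi(F)$, which you then annihilate by testing against an epimorphism $\phi'\colon X\twoheadrightarrow\Ker\gamma$ with $X\in\XX$. Both proofs rely on exactly the same two ingredients—generation supplies test objects in $\XX$, and Yoneda transports factorizations between $\mod\XX$ and $\AA$—so the engine is shared, but the packaging differs. The paper isolates a cleanly reusable reflection lemma at the cost of needing one extra term in the resolution of $G$; your approach stays with two-term presentations throughout and is more overtly homological, letting the long exact sequence of homology do the bookkeeping and reducing everything to the vanishing of one connecting morphism.
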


\begin{proof}
(1) The second statement is a general property of adjoint pairs.
Let $F\in\mod\XX$ and $M\in\AA$. 
Applying $\Hom_A(-,M)$ to \eqref{definition of phi} and $\Hom_{\mod A}(-,\Psi(M))$ to \eqref{projective present} respectively, we have a commutative diagram
\[\xymatrix{
0 \ar[r] & \Hom_\AA(\Phi(F), M) \ar[r] & \Hom_\AA(X_0, M) \ar[r]\ar[d]^{\Psi} & \Hom_\AA(X_1, M) \ar[d]^{\Psi} \\ 
0 \ar[r] & \Hom_{\mod\XX}(F, \Psi(M)) \ar[r] & \Hom_{\mod\XX}(\Psi(X_0),\Psi(M)) \ar[r] & \Hom_{\mod\XX}(\Psi(X_1),\Psi(M)) 
}\]
such that the vertical arrows are isomorphisms by Yoneda's lemma. 
Now the desired isomorphism $\Hom_\AA(\Phi(F), M)\cong\Hom_{\mod\XX}(F,\Psi(M))$ is obtained.
It shows that $(\Phi, \Psi)$ is an adjoint pair.

(2)
(i) We show that the counit $\Phi\Psi\to 1_{\AA}$ is an isomorphism if $\XX$ generates $\AA$.
Fix any $M\in\AA$.
By our assumptions on $\XX$, there exists an exact sequence $X_1\xrightarrow{\alpha}X_0\to M\to0$ with $X_0,X_1\in\XX$  such that $\Psi(X_1)\xrightarrow{\Psi(\alpha)}\Psi(X_0)\to\Psi(M)\to0$ is exact.
Then this gives the sequence \eqref{projective present} for $F:=\Psi(M)$, and it follows that $\Phi\Psi(M)\cong M$.

(ii) Let $L\xrightarrow{f}M\xrightarrow{g}N$ be a complex of objects of $\AA$. 
If the complex $\Psi(L)\xrightarrow{\Psi(f)}\Psi(M)\xrightarrow{\Psi(g)}\Psi(N)$ is exact, 
then so is $L\xrightarrow{f}M\xrightarrow{g}N$. 

Indeed, since $\Psi$ is left exact, the kernel of $\Psi(g)$ coincides with $\Psi(\Ker g)$. 
There is a morphism $h:L\to \Ker g$, and 
the assumption implies that $\Psi(h):\Psi(L)\to\Psi(\Ker g)$ is an epimorphism.
Thus any morphism from $\XX$ to $\Ker g$ factors thorugh $h$.
Since $\XX$ generates $\AA$, it follows that $h$ is an epimorphism.

(iii) We show that $\Phi$ is exact if $\XX$ generates $\AA$.  
Since $\Phi$ is right exact, we have only to prove that $\Phi$ preserves monomorphisms. 
Let $f:F\to G$ be a monomorphism in $\mod\XX$. 
Take projective resolutions \eqref{projective present} of $F$ and $\Psi(Y_2)\xrightarrow{\Psi(\gamma)}\Psi(Y_1)\xrightarrow{\Psi(\beta)}\Psi(Y_0)\to G\to0$ of $G$.
Since $\Psi(X_0)$ and $\Psi(X_1)$ are projective in $\mod\XX$, there is a commutative diagram 
\[\xymatrix{
& \Psi(X_1) \ar[r]^{\Psi(\alpha)} \ar[d]^{\Psi(h)} & \Psi(X_0) \ar[r] \ar[d]^{\Psi(g)} & F \ar[r]\ar[d]^f & 0 \\
\Psi(Y_2) \ar[r]_{\Psi(\gamma)} & \Psi(Y_1) \ar[r]_{\Psi(\beta)} & \Psi(Y_0) \ar[r] & G \ar[r] & 0
}\]
of morphisms in $\mod\XX$ with exact rows. 
Since $f$ is a monomorphism, 
the sequence
\[\Psi(X_1\oplus Y_2)\xrightarrow{\left[\begin{smallmatrix} \Psi(\alpha) & 0 \\ -\Psi(h) & \Psi(\gamma) \end{smallmatrix}\right]}
\Psi(X_0\oplus Y_1)\xrightarrow{\left[\begin{smallmatrix} \Psi(g) & \Psi(\beta) \end{smallmatrix}\right]}\Psi(Y_0)\]
is exact in $\mod\XX$. 
By (ii) we get an exact sequence 
\begin{equation}\label{exact0}
X_1\oplus Y_2\xrightarrow{\left[\begin{smallmatrix} \alpha & 0 \\ -h & \gamma \end{smallmatrix}\right]}
X_0\oplus Y_1\xrightarrow{\left[\begin{smallmatrix} g & \beta \end{smallmatrix}\right]}Y_0
\end{equation}
in $\AA$. 
There is a commutative diagram of morphisms in $\AA$
\[\xymatrix{
& X_1 \ar[r]^{\alpha} \ar[d]^{h} & X_0 \ar[r] \ar[d]^{g} & \Phi(F) \ar[r]\ar[d]^{\Phi(f)} & 0 \\
Y_2 \ar[r]_{\gamma} & Y_1 \ar[r]_{\beta} & Y_0 \ar[r] & \Phi(G) \ar[r] & 0
}\]
with exact rows, and we see by \eqref{exact0} that $\Phi(f)$ is a monomorphism.
\end{proof}

Now we can prove the main result of this section. 

\begin{theorem}\label{functor and gldim}
Let $\AA$ be an abelian category and $\XX$ a contravariantly finite subcategory that generates $\AA$. 
Then there is an inequality $\Xtridim{\XX}\Db(\AA)\leq\gldim(\mod\XX)$. 
\end{theorem}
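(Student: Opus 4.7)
The plan is to deduce the bound by transporting a Rouquier-type generation statement from $\Db(\mod\XX)$ to $\Db(\AA)$ via the functor $\Phi$, using the machinery already assembled in Theorem \ref{functor} and Proposition \ref{KK}. Let $d := \gldim(\mod\XX)$; we may assume $d < \infty$, otherwise there is nothing to prove. By Proposition \ref{KK} applied to the abelian category $\mod\XX$ (which is abelian since $\XX$ is contravariantly finite in the abelian category $\AA$, and hence has pseudo-kernels), we have
\[
\Db(\mod\XX) = \gen{\proj(\mod\XX)}{d+1}{}.
\]

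Next I would exploit Theorem \ref{functor}(2): because $\XX$ generates $\AA$, the functor $\Phi : \mod\XX \to \AA$ is exact and dense, and $\Phi\Psi \cong 1_\AA$. Exactness means $\Phi$ induces a triangle functor $\Db(\Phi) : \Db(\mod\XX) \to \Db(\AA)$. Density of $\Db(\Phi)$ is then immediate: any bounded complex $M^\bullet$ over $\AA$ is isomorphic to $\Phi(\Psi(M^\bullet))$ in $\Db(\AA)$ (the latter is defined since $\Psi(M^\bullet)$ is a bounded complex over $\mod\XX$, as $\Psi$ is applied termwise). Applying Lemma \ref{dense} to $\Db(\Phi)$ yields
\[
\Db(\AA) = \gen{\Db(\Phi)(\proj(\mod\XX))}{d+1}{}.
\]

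The final ingredient is to identify the image of the projectives. Every finitely generated projective $\XX$-module is a direct summand of some $\Psi(X) = \Hom_\XX(-,X)$ with $X \in \XX$, so $\proj(\mod\XX) = \add\{\Psi(X) : X \in \XX\}$. Since $\Phi\Psi \cong 1_\AA$, we get $\Phi(\Psi(X)) \cong X$, so $\Phi(\proj(\mod\XX)) \subseteq \add\XX$. Inserting this into the previous display gives $\Db(\AA) \subseteq \gen{\XX}{d+1}{}$, which by definition means $\Xtridim{\XX}\Db(\AA) \leq d = \gldim(\mod\XX)$.

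The argument is short because all the real content sits in Theorem \ref{functor} and Proposition \ref{KK}; the only point requiring minor care is the density of $\Db(\Phi)$, which might seem not to follow formally from density of $\Phi$, but is rescued by the identity $\Phi\Psi\cong 1_\AA$ applied termwise to bounded complexes. No further obstacle is anticipated.
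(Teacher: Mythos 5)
Your proof is essentially the same as the paper's: both apply Proposition \ref{KK} to $\mod\XX$, pass the resulting generation statement through the derived functor $\Db(\Phi)$ (using exactness and density from Theorem \ref{functor}), and identify $\Phi(\proj(\mod\XX))$ with $\add\XX$ via $\Phi\Psi\cong 1_\AA$. You merely spell out more explicitly than the paper does why $\Phi$ induces a dense triangle functor on bounded derived categories, which is a reasonable clarification rather than a different route.
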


\begin{proof}
We may assume $d:=\gldim(\mod\XX)<\infty$. 
Since $\mod\XX$ has enough projective objects with $\proj(\mod\XX)=\add\Psi(\XX)$, it holds that $\Db(\mod\XX)=\gen{\Psi(\XX)}{d+1}{}$ by Proposition \ref{KK}.
Using the dense triangle functor $\Db(\mod\XX)\to\Db(\AA)$ given in Theorem \ref{functor}, we have $\Db(\AA)=\gen{\Phi\Psi(\XX)}{d+1}{}=\gen{\XX}{d+1}{}$ by Lemma \ref{dense}.
\end{proof}

\section{Resolving subcategories}\label{section:resolving subcategory}

In this section, we give another approach to give upper bounds of dimensions of derived categories by using classical Cartan-Eilenberg resolutions. 
Our result Theorem \ref{main:resolving} gives a generalization of the inequality in Propositon \ref{KK}
due to Krause and Kussin \cite{KK}.
Note that Assadollahi and Hafezi gave a different generalization in \cite[Lemma 3.3]{AH}.

Let $\AA$ be an abelian category with enough projective objects. 
We say that a subcategory $\FF$ of $\AA$ is \emph{resolving} if
\begin{itemize}
\item
$\FF$ contains $\proj\AA$,
\item
$\FF$ is closed under direct summands, and
\item
for every exact sequence $0\to L\to M\to N\to0$ in $\AA$ with $N\in\FF$ one has $L\in\FF$ if and only if $M\in\FF$.
\end{itemize}
For a full subcategory $\CC$ of $\AA$, we denote by $\res\CC$ the smallest resolving subcategory of $\AA$ containing $\CC$.

The main result of this section is the following. 

\begin{theorem}\label{main:resolving}
Let $\AA$ be an abelian category with enough projective objects and $\FF$ a resolving subcategory.
Let $d\geq0$ and $M\in\Db(\AA)$ with $\Omega^d({\rm H}^iM)\in\FF$ for all $i\in\Z$.
Then $M$ belongs to $\gen{\FF}{\max\{2,d+1\}}{}$. 
In particular, one has $\Xtridim{\res(\Omega^d\AA)}\Db(\AA)\leq\max\{1,d\}$.
\end{theorem}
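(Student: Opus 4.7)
The plan is to establish the containment $M\in\gen{\FF}{\max\{2,d+1\}}{}$ by induction on $d$, using the truncation construction of Proposition \ref{sub proj} at the base and a splitting argument from a Cartan--Eilenberg resolution at the inductive step. Once this is proved, the ``in particular'' assertion is immediate by specializing $\FF=\res(\Omega^d\AA)$: then $\Omega^d(H^iM)\in\Omega^d\AA\subseteq\FF$ holds automatically for every $M\in\Db(\AA)$.

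The case $d=0$ reduces to $d=1$, because the short exact sequence $0\to\Omega H^iM\to P\to H^iM\to 0$ with $P\in\proj\AA\subseteq\FF$ forces $\Omega H^iM\in\FF$ by the resolving axiom as soon as $H^iM\in\FF$. For the base case $d=1$, I revisit the construction from Proposition \ref{sub proj}: a truncated projective resolution $(0\to Z^a\to P^a\to\cdots\to P^b\to 0)$ of $M$ produces a triangle $Z\to M\to B\to Z[1]$ in which $Z$ and $B$ split in $\Db(\AA)$ as direct sums of shifts of the kernels $Z^i$ and the images $B^i$ of the differentials of the complex. It then suffices to show that all such $Z^i$ and $B^i$ lie in $\FF$, strengthening the conclusion from $\gen{\Omega\AA}{2}{}$ to $\gen{\FF}{2}{}$. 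I argue by descending induction on $i$. At the top, the image of $d^{b-1}$ equals $\Omega H^bM$ and lies in $\FF$ by hypothesis. Given that $B^{i+1}\in\FF$, the short exact sequence
\[0\to H^iM\to P^i/B^i\to B^{i+1}\to 0\]
(arising from $Z^i/B^i=H^iM$ and $P^i/Z^i=B^{i+1}$) together with the horseshoe lemma yields, up to direct sum with projectives, an exact sequence $0\to\Omega H^iM\to B^i\to\Omega B^{i+1}\to 0$. Both outer terms lie in $\FF$---the first by hypothesis and the second by closure of $\FF$ under first syzygies---so $B^i\in\FF$ by closure under extensions and direct summands. The relation $Z^i=\Omega B^{i+1}$ then gives $Z^i\in\FF$. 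Consequently $Z,B\in\gen{\FF}{1}{}$ and $M\in\gen{\FF}{2}{}$.

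For the inductive step $d\ge 2$, assume the theorem holds for $d-1$. Choose a bounded complex representative $M^\bullet$ of $M$ together with a Cartan--Eilenberg projective resolution $P^{\bullet,\bullet}\to M^\bullet$; thus the columns $P^{p,\bullet}$ are projective resolutions of $M^p$, and the horizontal cohomologies $Q^{p,-j}:=H^p(P^{\bullet,-j})$ are projective and assemble vertically into projective resolutions of $H^pM$. Let $T=\mathrm{Tot}(P^{\bullet,\bullet})\cong M$. The top row $P^{\bullet,0}$ is a subcomplex of $T$, and the short exact sequence $0\to P^{\bullet,0}\to T\to Y\to 0$ produces a distinguished triangle in $\Db(\AA)$. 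Since each $Q^{p,0}$ is projective, $P^{\bullet,0}\cong\bigoplus_p Q^{p,0}[-p]\in\gen{\proj\AA}{1}{}\subseteq\gen{\FF}{1}{}$. The long exact sequence in cohomology, combined with the fact that $Q^{n,0}\twoheadrightarrow H^nM$ is the opening surjection of the projective resolution of $H^nM$, identifies $H^nY\cong\Omega H^{n+1}M$. Hence $\Omega^{d-1}H^nY\cong\Omega^dH^{n+1}M\in\FF$, and the inductive hypothesis applied to $Y$ gives $Y\in\gen{\FF}{\max\{2,d\}}{}=\gen{\FF}{d}{}$. The triangle $P^{\bullet,0}\to M\to Y\to P^{\bullet,0}[1]$ then places $M$ in $\gen{\FF}{1}{}*\gen{\FF}{d}{}\subseteq\gen{\FF}{d+1}{}$, closing the induction.

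The main obstacle is the base case $d=1$: Proposition \ref{sub proj} alone only delivers $M\in\gen{\Omega\AA}{2}{}$, and $\Omega\AA$ need not be contained in the given $\FF$. Refining the conclusion to $\gen{\FF}{2}{}$ requires the descending induction on the syzygies $B^i$ together with the horseshoe lemma, and it is precisely at this step that the hypothesis $\Omega H^iM\in\FF$ (rather than $H^iM\in\FF$ or the membership of arbitrary first syzygies) is consumed through the resolving axiom.
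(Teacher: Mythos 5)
Your proposal is correct and follows essentially the same route as the paper's proof: the base case $d\le 1$ refines the truncation triangle of Proposition \ref{sub proj} by a descending induction showing $Z^i,B^i\in\FF$, and the inductive step uses a Cartan--Eilenberg resolution to produce a triangle $P^{\bullet,0}\to M\to Y\to$ with $P^{\bullet,0}$ a direct sum of shifted projectives and ${\rm H}^iY$ a first syzygy of ${\rm H}^{i+1}M$, exactly as in the paper (which cites \cite{KK,AT} for this triangle). The only cosmetic difference is in the base case, where you obtain the key short exact sequence $0\to\Omega{\rm H}^iM\to B^i\oplus(\text{proj})\to Z^i\to 0$ via the horseshoe lemma applied to $0\to{\rm H}^iM\to P^i/B^i\to B^{i+1}\to 0$, whereas the paper gets the same sequence directly by pulling back $Z^i\twoheadrightarrow{\rm H}^iM$ along a projective cover of ${\rm H}^iM$; these are the same device.
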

\begin{proof}
(i) Let $d\le1$. We show that $M$ is in $\gen{\FF}{2}{}$.
%
We use the same notation as in the proof of Proposition \ref{sub proj}.
Let us inductively prove that $B^i\in\FF$ and $Z^i\in\FF$ for any $i$.
Clearly, this is the case for sufficiently large $i$.

(1) We have an exact sequence $0\to Z^{i-1}\to P^{i-1}\to B^i\to0$. 
Hence $B^i\in\FF$ implies $Z^{i-1}\in\FF$. 

(2) There exists an exact sequence $0\to B^i\to Z^i\to {\rm H}^iM\to0$. 
Taking the pull-back diagram of $Z^i\to{\rm H}^iM$ and a surjection $Q_i\to{\rm H}^iM$ with $Q_i$ projective, we get an exact sequence $0\to \Omega({\rm H}^iM)\to B^i\oplus Q\to Z^i\to0$.
Since $\Omega({\rm H}^iM)$ belongs to $\FF$, it holds that $Z^i\in\FF$ implies $B^i\in\FF$.

Thus, $Z$ and $B$ belong to $\gen{\FF}{}{}$. 
Since there exists a triangle $Z\xrightarrow{f}M\to B\to Z[1]$, 
the object $M$ is in $\gen{\FF}{2}{}$.

(ii) We obtain a triangle $N\to P\to M\to N[1]$ in $\Db(\AA)$
with $P\in\gen{\proj\AA}{}{}$ and ${\rm H}^iN\cong\Omega({\rm H}^iM)$ for all $i\in\Z$,
by observing classical Cartan-Eilenberg resolutions (see also \cite[Lemma 2.5]{KK} and \cite[Proposition 3.2]{AT}).
%
%

Thus, the assertion follows inductively from (i) and (ii).
\end{proof}



\begin{example}\label{4.5}
Consider one of the following two situations.
\begin{enumerate}[{\rm (1)}]
\item
Let $R$ be a commutative Cohen-Macaulay local ring of Krull-dimension $d$, $\Lambda$ be an $R$-order  (i.e., $\Lambda$ is an $R$-algebra and $\Lambda\in\CM(R)$ \cite{A,CR}), and $\CM(\Lambda):=\{X\in\mod\Lambda \mid X\in\CM(R)\}$ be the category of (maximal) Cohen-Macaulay $\Lambda$-modules.
\item
Let $\Lambda$ be an Iwanaga-Gorenstein ring with self-injective dimension $d$ (i.e. $\Lambda$ is a noetherian ring satisfying $\id_\Lambda\Lambda=d=\id_{\Lambda^{\rm op}}\Lambda$ \cite{Iwana}), and $\CM(\Lambda):=\{X\in\mod\Lambda \mid \Ext^i_\Lambda(X,\Lambda)=0\mbox{ for any }i>0\}$ be the category of (maximal) Cohen-Macaulay $\Lambda$-modules.
\end{enumerate}
Then one has
\begin{equation}\label{CM}
\Xtridim{\CM(\Lambda)}\Db(\mod\Lambda)\leq\max\{1,d\}.
\end{equation}
In particular, if $\Lambda$ is representation-finite (i.e. $\CM(\Lambda)$ has an additive generator), then it holds that $\tridim\Db(\mod\Lambda)\leq\max\{1,d\}$.
\end{example}

Note that if $\Lambda$ is a Gorenstein $R$-order (i.e., $\Hom_R(\Lambda, \omega_R)$ is a projective $\Lambda$-module, where $\omega_R$ is a canonical module of $R$), then $\Lambda$ is an Iwanaga-Gorenstein ring and the categories $\CM(\Lambda)$ defined in Example \ref{4.5}(1) and (2) are the same.

\section{Applications}\label{section:application}

In this section, we give several applications for Iwanaga-Gorenstein rings, commutative Cohen-Macaulay rings and finitely generated commutative algebras over fields.
Throughout this section, let $\Lambda$ be a ring. 

\subsection{Cotilting modules}\label{section:cotilting module}

In representation theory, the notion of tilting modules/complexes plays an important role 
to control derived categories \cite{Ri}. 
Its dual notion of cotilting modules was studied by Auslander and Reiten as a non-commutative generalization 
of canonical modules over commutative rings \cite{AB,AM,AR2}. 
In this section, we apply the results in the previous section to rings admitting cotilting modules.
Let us begin with recalling the definition of a cotilting module. 

\begin{definition}\label{cotilting}
Let $T$ be a finitely generated $\Lambda$-module.
Denote by $\XX_T$ the subcategory of $\mod\Lambda$ consisting of modules $X$ with $\Ext_\Lambda^i(X, T)=0$ for all $i>0$.
We call $T$ \emph{cotilting} if it satisfies the following three conditions.
\begin{enumerate}[(1)]
\item
The injective dimension of the $\Lambda$-module $T$ is finite.
\item
$\Ext_\Lambda^i(T, T)=0$ for all $i>0$ (i.e., $T\in\XX_T$).
\item
For any $X\in\XX_T$, 
there exists an exact sequence $0\to X\to T'\to X'\to0$ in $\mod\Lambda$ with $T'\in\add T$ and $X'\in\XX_T$.
\end{enumerate}
\end{definition}

\begin{example}
\begin{enumerate}[(1)]
\item
Let $R$ be a commutative Cohen-Macaulay local ring with a canonical module $\omega_R$. 
Then $\omega_R$ is a cotilting module over $R$ and $\XX_{\omega_R}=\CM(R)$ holds.
Let $\Lambda$ be an $R$-order.
For any tilting $\Lambda^{\rm op}$-module $T$ in the sense of Miyashita \cite{Mi} with $T\in\CM(R)$, the $\Lambda$-module $\Hom_R(T,\omega_R)$ is cotilting.
For the cotilting $\Lambda$-module $\omega_\Lambda:=\Hom_R(\Lambda,\omega_R)$ it holds that $\XX_{\omega_\Lambda}=\CM(\Lambda)$.
\item
Let $\Lambda$ be an Iwanaga-Gorenstein ring.
Then $\Lambda$ is a cotilting module over $\Lambda$, and hence $\XX_\Lambda=\CM(\Lambda)$. 
\end{enumerate}
\end{example}

Let $T$ be a cotilting module over $\Lambda$. 
For any $M\in\mod\Lambda$, 
we define $d_M$ to be the minimum integer $n$ satisfying $\Ext_\Lambda^i(M,T)=0$ for any $i>n$. 
Clearly, $d_M\le\id T$. 
%
%
%

Now the result below is obtained.

\begin{theorem}\label{cotilting and dimension}
Let $T$ be a cotilting module over $\Lambda$. 
Then one has 
\[\Xtridim{\XX_T}\Db(\mod\Lambda)\leq\max\{1, \id T\}.\]
\end{theorem}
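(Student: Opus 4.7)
The plan is to reduce the statement to Theorem \ref{main:resolving} applied with $\FF := \XX_T$ and $d := \id T$, inside the abelian category $\AA = \mod\Lambda$. If $\id T = \infty$ the inequality is vacuous, so assume $d < \infty$.

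First, I would check that $\XX_T$ is a resolving subcategory of $\mod\Lambda$. Three things need verifying: $\XX_T$ contains $\proj\Lambda$ because $\Ext_\Lambda^{>0}(P,T) = 0$ for every projective $P$; $\XX_T$ is closed under direct summands since $\Ext_\Lambda^i(-,T)$ distributes over finite direct sums; and for any short exact sequence $0 \to L \to M \to N \to 0$ with $N \in \XX_T$, the long exact sequence in $\Ext_\Lambda^\bullet(-,T)$ yields $\Ext_\Lambda^i(L,T) \cong \Ext_\Lambda^i(M,T)$ for all $i > 0$, so $L \in \XX_T$ if and only if $M \in \XX_T$. For these properties only condition (1) of Definition \ref{cotilting} is actually used.

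Second, I would verify the syzygy containment $\Omega^d\AA \subseteq \XX_T$ by dimension shifting: for any $M \in \mod\Lambda$ and any $i > 0$, one has $\Ext_\Lambda^i(\Omega^d M, T) \cong \Ext_\Lambda^{i+d}(M, T) = 0$ since $i+d > \id T$. In particular, for every object $N \in \Db(\mod\Lambda)$ and every $i \in \Z$, the $d$-th syzygy $\Omega^d({\rm H}^i N)$ lies in $\XX_T$.

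Finally, applying Theorem \ref{main:resolving} with the resolving subcategory $\FF := \XX_T$ and the integer $d$, the hypothesis is satisfied by the previous step, and one concludes that every $N \in \Db(\mod\Lambda)$ belongs to $\gen{\XX_T}{\max\{2, d+1\}}{}$. This is exactly the asserted bound $\Xtridim{\XX_T}\Db(\mod\Lambda) \leq \max\{1, d\}$. The main obstacle is essentially absent, since Theorem \ref{main:resolving} has already encoded the substantive Cartan--Eilenberg argument; the only thing a reader might pause on is that conditions (2) and (3) of the cotilting definition play no explicit role in this proof, being subsumed by the mere finiteness of $\id T$ together with the resolving property of $\XX_T$.
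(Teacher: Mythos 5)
Your proof is correct, but it takes a genuinely different route from the paper's. The paper proves this theorem via the functor-category machinery of Section \ref{section:functor category}: it invokes Auslander--Buchweitz approximation theory to establish that $\XX_T$ is contravariantly finite and generating in $\mod\Lambda$, cites Iyama's bound $\gldim(\mod\XX_T)\le\max\{2,\id T\}$, and then applies Theorem \ref{functor and gldim} to get $\Xtridim{\XX_T}\Db(\mod\Lambda)\le\max\{2,\id T\}$, finishing with a separate case argument via Proposition \ref{sub proj} to improve $2$ to $1$ when $\id T\le 1$. Your argument instead routes everything through Theorem \ref{main:resolving} (the Cartan--Eilenberg approach of Section \ref{section:resolving subcategory}): you check directly that $\XX_T$ is resolving and that $\Omega^d(\mod\Lambda)\subseteq\XX_T$ for $d=\id T$, then apply the theorem in one shot to get $\max\{1,d\}$ with no case split. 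Both verifications are correct as you present them. Your observation at the end is also a real one and worth noting: your proof uses only condition (1) of Definition \ref{cotilting}, so it actually establishes the stronger statement that $\Xtridim{\XX_T}\Db(\mod\Lambda)\le\max\{1,\id T\}$ for \emph{any} module $T$ of finite injective dimension, whereas the paper's route genuinely relies on the full cotilting structure (through Auslander--Buchweitz approximations and the cited global dimension bound for $\mod\XX_T$). The trade-off is that the paper's proof illustrates and exercises the functor-category bound from Theorem \ref{functor and gldim}, whereas yours bypasses it entirely in favour of Theorem \ref{main:resolving}.
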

\begin{proof}
It is known by Auslander-Buchweitz approximation theory \cite{AB}
that $\XX_T$ is a contravariantly finite subcategory of $\mod\Lambda$ generating $\mod\Lambda$.
As a special case of \cite[(2.6.1)]{I2}, we have the inequality $\gldim(\mod\XX_T)\leq\max\{2,\id T\}$.
Thus, the inequality $\Xtridim{\XX_T}\Db(\mod\Lambda)\leq\max\{2,\id T\}$ holds by Theorem \ref{functor and gldim}. 
If $\id T\le1$, then $\Omega(\mod\Lambda)$ is contained in $\XX_T$, and $\Xtridim{\XX_T}\Db(\mod\Lambda)\leq1$
by Proposition \ref{sub proj}. 
\end{proof}

\begin{remark}
If $\id T=0$, then the inequality $\Xtridim{\XX_T}\Db(\mod\Lambda)\leq\id T$ does not necessarily hold. 
Let $\Lambda$ be a finite dimensional non-semisimple self-injective algebra over a field. 
Then the $\Lambda$-module $\Lambda$ is a cotilting module with $\id\Lambda=0$ and $\XX_\Lambda=\mod\Lambda$. 
However, $\gen{\mod\Lambda}{}{}$ is different from $\Db(\mod\Lambda)$.
\end{remark}

We close this subsection by giving more general upper bounds for the derived dimensions.
Let $\Lambda$ be a ring and $T$ a cotilting $\Lambda$-module.
Recall that a subcategory $\CC$ of $\XX_T$ is \emph{$n$-cluster tilting} 
 (or \emph{maximal $(n-1)$-orthogonal}) \cite{I1,I2} 
if $C$ is a functorially finite subcategory of $X_T$ and 
\begin{eqnarray*}
\CC&=&\{X\in\XX_T\ |\ \Ext_\Lambda^i(\CC,X)=0\ \mbox{ for any }\ 0<i<n\}\\
&=&\{X\in\XX_T\ |\ \Ext_\Lambda^i(X,\CC)=0\ \mbox{ for any }\ 0<i<n\}.
\end{eqnarray*}
The following result holds, where the case $n=1$ is an analogue of Theorem \ref{cotilting and dimension}.

\begin{proposition}
There is an inequality $\Xtridim{\CC}\Db(\mod\Lambda)\le\max\{n+1,\id T\}$.
\end{proposition}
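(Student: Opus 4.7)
The strategy is to mirror the proof of Theorem \ref{cotilting and dimension}, applying Theorem \ref{functor and gldim} to bound $\Xtridim{\CC}\Db(\mod\Lambda)$ by $\gldim(\mod\CC)$, and then invoking the higher Auslander-Reiten counterpart of the inequality $\gldim(\mod\XX_T)\le\max\{2,\id T\}$ that was used in the $n=1$ case.

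First I would verify the hypotheses of Theorem \ref{functor and gldim}: that $\CC$ is a contravariantly finite subcategory of $\mod\Lambda$ which generates $\mod\Lambda$. Contravariant finiteness follows by composing a right $\CC$-approximation in $\XX_T$ (available since $\CC$ is functorially finite in $\XX_T$ by hypothesis) with a right $\XX_T$-approximation in $\mod\Lambda$ (available by Auslander-Buchweitz theory, since $T$ is cotilting); the composition is easily checked to be a right $\CC$-approximation in $\mod\Lambda$. Generation is immediate once one observes that $\proj\Lambda\subseteq\CC$: indeed $\Lambda\in\XX_T$ and trivially satisfies $\Ext_\Lambda^i(\Lambda,\CC)=0$ for all $i>0$ (in particular for $0<i<n$), so the second $\Ext$-characterization defining $\CC$ puts $\Lambda\in\CC$, and then any $M\in\mod\Lambda$ admits a projective cover by an object of $\CC$.

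Next I would invoke the estimate $\gldim(\mod\CC)\le\max\{n+1,\id T\}$ from higher Auslander-Reiten theory; this is the natural $n$-cluster tilting generalization of the bound $\gldim(\mod\XX_T)\le\max\{2,\id T\}$ cited from Iyama's work in the proof of Theorem \ref{cotilting and dimension}, and it reduces to that case at $n=1$. Combining it with Theorem \ref{functor and gldim} yields
$$\Xtridim{\CC}\Db(\mod\Lambda)\le\gldim(\mod\CC)\le\max\{n+1,\id T\},$$
as desired.

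The main technical obstacle, should one wish to reprove the global-dimension estimate rather than quote it, is to produce for each simple object of $\mod\CC$ (necessarily of the form $S_C=\Psi(C)/\rad\Psi(C)$ for $C\in\CC$) a projective resolution of length at most $\max\{n+1,\id T\}$. The standard recipe is to splice together the $(n-1)$-step $\CC$-resolutions of objects of $\XX_T$ coming from the cluster-tilting hypothesis with the finite $T$-coresolutions of their syzygies, whose lengths are controlled by $\id T$; this splicing is the higher-dimensional analogue of the $n=1$ argument and is the only non-formal ingredient.
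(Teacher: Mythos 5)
Your proof is correct and follows exactly the same route as the paper: apply Theorem \ref{functor and gldim} together with the estimate $\gldim(\mod\CC)\le\max\{n+1,\id T\}$ from \cite[(2.6.1)]{I2}. You usefully spell out the verification (left implicit in the paper) that $\CC$ is contravariantly finite in $\mod\Lambda$ and generates it, and your arguments for both are sound.
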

\begin{proof}
It is shown in \cite[(2.6.1)]{I2} that $\gldim(\mod\CC)\le\max\{n+1,\id T\}$.
Thus the assertion immediately follows from Theorem \ref{functor and gldim}.
\end{proof}

\subsection{Krull dimension in terms of triagle dimension}

The purpose of this subsection is to show that our triangle dimension with respect to a subcategory gives a formulation of the Krull dimensions of rings in terms of the triangulated structure of their derived categories.

Let $R$ be a commutative ring, and let $\Lambda$ be an $R$-algebra which is a finitely generated faithful $R$-module.
We define the \emph{singular locus} by
\[\operatorname{Sing}_R\Lambda:=\{\p\in\Spec R\ |\ \gldim{\Lambda_\p}>\dim R_\p\}.\]
When $\Lambda=R$, this gives the usual singular locus $\operatorname{Sing}R$ of $R$, that is, the set of prime ideals $\p$ of $R$ such that the local ring $R_\p$ is not regular.
For a subset $W$ of $\Spec R$, let $\operatorname{NP}^{-1}(W)$ be the subcategory of $\mod\Lambda$ consisting of modules $X$ such that $X_\p\in\proj\Lambda_\p$ for all $\p\in\Spec R\setminus W$.
We start by making a lemma.

\begin{lemma}\label{NP}
Let $R$ be a commutative ring and and $\Lambda$ be an $R$-algebra that is a finitely generated $R$-module.
Then the following hold.
\begin{enumerate}[{\rm (1)}]
\item
$\Xtridim{\operatorname{NP}^{-1}(\operatorname{Sing}_R\Lambda)}\Db(\mod\Lambda)\le\max\{1,\dim R\}$.
\item
$\Xtridim{\operatorname{NP}^{-1}(\Spec R\setminus\{\p\})}\Db(\mod\Lambda)\ge\gldim\Lambda_\p$ for every $\p\in\Spec R$.
\end{enumerate}
\end{lemma}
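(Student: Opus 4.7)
For \textbf{(1)}, I would apply Theorem \ref{main:resolving} with $\FF:=\operatorname{NP}^{-1}(\operatorname{Sing}_R\Lambda)$ and $d:=\dim R$. Two things need checking. First, $\FF$ is resolving: localization preserves projectivity, so $\proj\Lambda\subseteq\FF$; closure under summands is clear; and for an exact sequence $0\to L\to M\to N\to 0$ with $N\in\FF$, localizing at any $\p\notin\operatorname{Sing}_R\Lambda$ splits the sequence because $N_\p\in\proj\Lambda_\p$, whence $L\in\FF$ if and only if $M\in\FF$. Second, $\Omega^d(\mod\Lambda)\subseteq\FF$: for $\p\notin\operatorname{Sing}_R\Lambda$ one has $\gldim\Lambda_\p\le\dim R_\p\le d$ by the definition of the singular locus, and since the localization of a projective resolution over $\Lambda$ is a projective resolution over $\Lambda_\p$, the module $(\Omega^d X)_\p$ is projective over $\Lambda_\p$ for every $X\in\mod\Lambda$. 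The bound $\Xtridim{\FF}\Db(\mod\Lambda)\le\max\{1,\dim R\}$ then follows directly from Theorem \ref{main:resolving}.

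For \textbf{(2)}, I would exploit the localization functor $F:=(-)_\p:\Db(\mod\Lambda)\to\Db(\mod\Lambda_\p)$, which is triangulated because localization is exact. Setting $\XX:=\operatorname{NP}^{-1}(\Spec R\setminus\{\p\})$, the definition of $\XX$ immediately gives $F\XX\subseteq\proj\Lambda_\p$. Provided $F$ is dense, Lemma \ref{dense} combined with Proposition \ref{KK} applied to $\mod\Lambda_\p$ yields
\[\gldim\Lambda_\p=\Xtridim{\proj\Lambda_\p}\Db(\mod\Lambda_\p)\le\Xtridim{\XX}\Db(\mod\Lambda),\]
which is the desired lower bound.

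The step I expect to be the main obstacle is the density of $F$. Every $N\in\mod\Lambda_\p$ lifts to some $M\in\mod\Lambda$ with $M_\p\cong N$, by clearing denominators in any finite presentation of $N$. To pass from modules to complexes, the crucial input is the standard isomorphism $\Ext^n_\Lambda(M,M')_\p\cong\Ext^n_{\Lambda_\p}(M_\p,M'_\p)$ for $M,M'\in\mod\Lambda$ and $n\ge 0$: every morphism $\phi$ in $\Db(\mod\Lambda_\p)$ between lifted objects has the form $F(\psi)\cdot s^{-1}$ for some $\psi$ in $\Db(\mod\Lambda)$ and some $s\in R\setminus\p$, and since multiplication by $s^{-1}$ is an automorphism of $M_\p$, the cone of $\phi$ is isomorphic to $F(\mathrm{cone}\,\psi)$. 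A routine induction on the number of nonzero cohomology groups of an object $C\in\Db(\mod\Lambda_\p)$ then shows that $C$ lies in the essential image of $F$.
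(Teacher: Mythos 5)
Your proof follows the same strategy as the paper: for (1) you apply Theorem \ref{main:resolving} after observing that $\operatorname{NP}^{-1}(\operatorname{Sing}_R\Lambda)$ is resolving and contains $\Omega^d(\mod\Lambda)$, and for (2) you localize at $\p$ and combine Lemma \ref{dense} with Proposition \ref{KK}. The density of the localization functor $(-)_\p:\Db(\mod\Lambda)\to\Db(\mod\Lambda_\p)$, which you correctly flag as the delicate point and sketch via lifting of objects and of morphisms using $\Ext^n_\Lambda(M,N)_\p\cong\Ext^n_{\Lambda_\p}(M_\p,N_\p)$, is used implicitly in the paper's phrase ``localizing this at $\p$'' without further comment.
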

\begin{proof}
(1) Let $d=\dim R$.
For a prime ideal $\p$ of $R$ which is not in $\operatorname{Sing}_R\Lambda$, 
we have $(\Omega^d(\mod\Lambda))_\p\subseteq\Omega^d(\mod\Lambda_\p)\subseteq\proj\Lambda_\p$, as $\gldim\Lambda_\p=\dim R_\p\leq d$.
Hence $\Omega^d(\mod\Lambda)$ is contained in $\operatorname{NP}^{-1}(\operatorname{Sing}_R\Lambda)$, 
and the assertion follows from Theorem \ref{main:resolving}, 
since $\operatorname{NP}^{-1}(\operatorname{Sing}_R\Lambda)$ is a resolving subcategory of $\mod\Lambda$.

(2) Let $d=\gldim\Lambda_\p$. 
Assume that the left hand side is at most $d-1$. 
Then we have $\Db(\mod\Lambda)=\gen{\operatorname{NP}^{-1}(\Spec R\setminus\{\p\})}{d}{}$.
Localizing this at $\p$, we see that $\Db(\mod\Lambda_\p)=\gen{\operatorname{NP}^{-1}(\Spec R\setminus\{\p\})_\p}{d}{}\subseteq\gen{\proj\Lambda_\p}{d}{}$.
This contradicts Proposition \ref{KK}.
\end{proof}

The proposition below immediately follows from Lemma \ref{NP}.

\begin{proposition}\label{affine}
Let $R$ be a commutative ring of positive Krull dimension.
Let $\Lambda$ be an $R$-algebra that is a finitely generated $R$-module.
Assume that there exists a maximal ideal $\m$ of $R$ such that $\gldim\Lambda_\m=\dim R_\m=\dim R$.
Then one has
\[\dim R=\Xtridim{\operatorname{NP}^{-1}(\operatorname{Sing}_R\Lambda)}\Db(\mod\Lambda).\] 
\end{proposition}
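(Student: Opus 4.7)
The plan is to derive both inequalities directly from Lemma \ref{NP}, combining them with the monotonicity of $\Xtridim{(-)}\TT$ in its subcategory argument.

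First, I would establish the upper bound. Since $R$ has positive Krull dimension, $\max\{1,\dim R\}=\dim R$, so Lemma \ref{NP}(1) immediately gives
\[
\Xtridim{\operatorname{NP}^{-1}(\operatorname{Sing}_R\Lambda)}\Db(\mod\Lambda)\le\dim R.
\]

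Next I would handle the lower bound. The hypothesis $\gldim\Lambda_\m=\dim R_\m$ shows that $\m\notin\operatorname{Sing}_R\Lambda$, hence $\operatorname{Sing}_R\Lambda\subseteq\Spec R\setminus\{\m\}$. By the very definition of $\operatorname{NP}^{-1}(-)$ this inclusion yields
\[
\operatorname{NP}^{-1}(\operatorname{Sing}_R\Lambda)\subseteq\operatorname{NP}^{-1}(\Spec R\setminus\{\m\}).
\]
Since enlarging the subcategory $\XX$ can only make it easier to build $\TT$, the function $\XX\mapsto\Xtridim{\XX}\TT$ is order-reversing; this is just the observation that if $\gen{\XX}{n+1}{}=\TT$ and $\XX\subseteq\YY$, then $\gen{\YY}{n+1}{}=\TT$ as well. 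Applying this monotonicity together with Lemma \ref{NP}(2) for the prime $\p=\m$ gives
\[
\Xtridim{\operatorname{NP}^{-1}(\operatorname{Sing}_R\Lambda)}\Db(\mod\Lambda)\ge\Xtridim{\operatorname{NP}^{-1}(\Spec R\setminus\{\m\})}\Db(\mod\Lambda)\ge\gldim\Lambda_\m=\dim R.
\]

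Combining the two inequalities yields the desired equality. There is no genuine obstacle here: the entire content lies in Lemma \ref{NP} and in observing that the hypothesis on $\m$ is exactly what places $\m$ outside the singular locus, so that the lower-bound subcategory from part (2) of the lemma sits inside the upper-bound subcategory from part (1). The only minor point worth recording explicitly in the write-up is the order-reversing behavior of $\Xtridim{\XX}\TT$ in $\XX$, which is transparent from Definition \ref{dtcat}.
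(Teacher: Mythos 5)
Your proof is correct and takes the same route as the paper: upper bound from Lemma \ref{NP}(1) using $\dim R\ge 1$, lower bound from Lemma \ref{NP}(2) at $\p=\m$ via the inclusion $\operatorname{Sing}_R\Lambda\subseteq\Spec R\setminus\{\m\}$. You merely make explicit the monotonicity of $\operatorname{NP}^{-1}(-)$ and the order-reversing behavior of $\Xtridim{(-)}\TT$, which the paper's proof uses implicitly.
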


\begin{proof}
Since $\operatorname{Sing}_R\Lambda$ is contained in $\Spec R\setminus\{\m\}$, we have
$$
\Xtridim{\operatorname{NP}^{-1}(\operatorname{Sing}_R\Lambda)}\Db(\mod\Lambda)\ge\gldim\Lambda_{\m}=\dim R
$$
by Lemma \ref{NP}(2).
The converse inequality follows from Lemma \ref{NP}(1).
\end{proof}

Now we can realize the goal for this subsection.

\begin{theorem}\label{3122219}
Let $R$ be a finitely generated commutative algebra over a field. 
Suppose that $R_\p$ is a field for some $\p\in\Spec R$ with $\dim R/\p=\dim R$ 
(e.g. $R$ is reduced). 
Then there is an equality 
\[\dim R=\Xtridim{\operatorname{NP}^{-1}(\operatorname{Sing}R)}\Db(\mod R).\]
\end{theorem}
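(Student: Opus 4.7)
The strategy is to invoke Proposition \ref{affine} with $\Lambda = R$. What is needed is to produce a maximal ideal $\m$ of $R$ such that $R_\m$ is a regular local ring of dimension equal to $\dim R$: then $\gldim R_\m = \dim R_\m = \dim R$ by the Auslander--Buchsbaum--Serre theorem, and Proposition \ref{affine} yields the desired equality (noting that $\operatorname{Sing}_R R = \operatorname{Sing} R$). We may assume $\dim R > 0$; the case $\dim R = 0$, under the reducedness side of the hypothesis, degenerates to $R$ being a finite product of fields, where the equality $0 = 0$ is immediate.

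Two standard facts from commutative algebra will be combined. First, since $R$ is finitely generated over a field, $R$ is excellent, so its regular locus $U := \{\q \in \Spec R \mid R_\q \text{ is regular}\}$ is open in $\Spec R$. Second, $\Spec R$ is a Jacobson space, and in an affine domain all maximal ideals have height equal to the Krull dimension.

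The argument then runs as follows. Choose $\p$ as in the hypothesis, so $R_\p$ is a field (hence regular) and $\dim R/\p = \dim R$. Then $\p \in U \cap V(\p)$, so $U \cap V(\p)$ is a nonempty open subset of $V(\p) \cong \Spec(R/\p)$. Since $\Spec(R/\p)$ is Jacobson, this open set contains a closed point $\m/\p$; the corresponding $\m \subseteq R$ is a maximal ideal with $\m \supseteq \p$ and $R_\m$ regular. Because $R/\p$ is an affine domain of Krull dimension $\dim R$, the maximal ideal $\m/\p$ has height $\dim R$, whence
$$
\dim R \geq \dim R_\m \geq \dim(R_\m/\p R_\m) = \dim(R/\p)_{\m/\p} = \dim R,
$$
forcing $\dim R_\m = \dim R$. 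One now applies Proposition \ref{affine} to conclude.

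The main obstacle is producing an $\m$ satisfying both properties simultaneously---regularity of $R_\m$ and sitting above a prime $\p$ along which the full Krull dimension is attained. The two conditions in the hypothesis are both used essentially: $R_\p$ being a field places $\p$ in the open regular locus (so $U \cap V(\p)$ is nonempty), while the condition $\dim R/\p = \dim R$ guarantees, via the dimension formula for affine domains, that any maximal ideal lying over $\p$ realizes the full Krull dimension of $R$.
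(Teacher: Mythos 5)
Your proof is correct and follows essentially the same route as the paper's: produce a maximal ideal $\m$ above $\p$ with $R_\m$ regular by combining openness of the regular locus with the Jacobson property of affine algebras, observe that $\dim R_\m=\dim R$ via the dimension theory of affine domains, and invoke Proposition \ref{affine}. The paper phrases the first step by writing $\operatorname{Sing}R=V(J)$ and choosing $\m/\p$ not containing the nonzero ideal $(J+\p)/\p$ of $R/\p$, which is the same argument in algebraic rather than topological language; your explicit dispatch of the $\dim R=0$ case, which Proposition \ref{affine} excludes, is a welcome clarification that the paper leaves implicit.
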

\begin{proof}
We find an ideal $J$ of $R$ such that $\operatorname{Sing}R=V(J)$. 
By assumption, $(J+\p)/\p$ is a nonzero ideal of the integral domain $R/\p$ that is finitely generated over a field.
Hence there exists a maximal ideal $\m/\p$ of $R/\p$ which does not contain $(J+\p)/\p$. 
Note that $\dim R_\m=\dim R$ and that $R_\m$ is regular. 
The equality in the theorem follows from Proposition \ref{affine}.
\end{proof}

We should remark that the right hand side of the equality in the Theorem \ref{3122219} is independent of the Krull dimension of $R$.
So, in the case where $R$ is a finitely generated reduced commutative algebra over a field, the Krull dimension of $R$ is determined by the triangulated structure of $\Db(\mod R)$ and the singular locus of $R$.

\subsection{Derived dimensions and stable dimensions}

In this subsection, we investigate the relationship between derived dimensions and stable dimensions.

Let $\Lambda$ be an Iwanaga-Gorenstein ring.
Let $\sCM(\Lambda)$ denote the {\em stable category} of $\CM(\Lambda)$.
This category is defined as follows: the objects of $\sCM(\Lambda)$ are the same as those of $\CM(\Lambda)$, and the hom-set $\Hom_{\sCM(\Lambda)}(M,N)$ is the quotient group of $\Hom_\Lambda(M,N)$ by the subgroup consisting of morphisms factoring through projective modules. This is usually denoted by $\underline{\Hom}_\Lambda(M,N)$.
It is known that $\sCM(\Lambda)$ is a triangulated category \cite{B,H}, and its triangle dimension $\tridim\sCM(\Lambda)$ is called the {\em stable dimension} of $\Lambda$.
Since there is a dense triangle functor $\Db(\mod\Lambda)\to\sCM(\Lambda)$ \cite{B}, the stable dimension is bounded above by the derived dimension:
$$
\tridim\sCM(\Lambda)\le\tridim\Db(\mod\Lambda)
$$
holds by Lemma \ref{dense}.

Let $\CC$ be a subcategory of $\CM(\Lambda)$ and $n$ a positive integer.
Then we denote by $\Omega^{-n}\CC$ the subcategory of $\CM(\Lambda)$ consisting of objects $X$ that admits an exact sequence $0\to C\to P^0\to\cdots\to P^{n-1}\to X\to0$ in $\CM(\Lambda)$ with each $P^i$ projective and $C\in\CC$.
Note that $\Omega^{-n}\CC$ contains $\proj\Lambda$.

\begin{lemma}\label{scm and derived}
Let $\Lambda$ be an Iwanaga-Gorenstein ring of self-injective dimension $d$. 
If there exists a subcategory $\XX$ of $\CM(\Lambda)$ 
such that $\sCM(\Lambda)=\gen{\XX}{n+1}{}$ for some non-negative integer $n$, 
then we have 
\[\Db(\mod\Lambda)=\gen{\XX^\Omega}{\max\{2, d+1\}\cdot(n+1)}{}\]
where $\XX^\Omega=\bigcup_{i\in\Z}\Omega^i\XX$. 
\end{lemma}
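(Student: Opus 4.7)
The plan is to reduce the lemma to two ingredients. The first is the bound
\[
\Db(\mod\Lambda)=\gen{\CM(\Lambda)}{\max\{2,d+1\}}{}
\]
supplied by Example \ref{4.5}(2). The second is the hypothesis $\sCM(\Lambda)=\gen{\XX}{n+1}{}$. Using the general inclusion $\gen{\gen{\YY}{a}{}}{b}{}\subseteq\gen{\YY}{ab}{}$, which follows from the associativity of $*$ and the stability of $\add$ under direct summands, it therefore suffices to prove the containment $\CM(\Lambda)\subseteq\gen{\XX^\Omega}{n+1}{\Db(\mod\Lambda)}$.

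I would establish this by induction on $k\ge 1$: for every $M\in\CM(\Lambda)$ with $M\in\gen{\XX}{k}{\sCM(\Lambda)}$, the object $M$ belongs to $\gen{\XX^\Omega}{k}{\Db(\mod\Lambda)}$. For the base case $k=1$, $M$ is a direct summand in $\sCM(\Lambda)$ of some $\bigoplus_j X_j[m_j]$ with $X_j\in\XX$ and $m_j\in\Z$; since the suspension in $\sCM(\Lambda)$ is $\Omega^{-1}$, each $X_j[m_j]$ is represented by a module $\Omega^{-m_j}X_j\in\XX^\Omega$. Then I would invoke the standard fact that a direct summand $M$ of $N$ in $\sCM(\Lambda)$ is automatically a direct summand of $N\oplus P$ in $\CM(\Lambda)$ for some projective $P$: if $gf=1_M$ in $\sCM(\Lambda)$ with lifts $f,g$ in $\CM(\Lambda)$, then $gf-1_M$ factors through a projective, producing an explicit splitting. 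Since projectives belong to $\XX^\Omega$ (as noted before Proposition \ref{sub proj}), this yields $M\in\gen{\XX^\Omega}{1}{\Db(\mod\Lambda)}$.

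For the inductive step $k\to k+1$, I would write $M$ as a summand in $\sCM(\Lambda)$ of $M'\in\gen{\XX}{k}{\sCM(\Lambda)}*\gen{\XX}{1}{\sCM(\Lambda)}$, so that there is a triangle $L\to M'\to N\to L[1]$ in $\sCM(\Lambda)$ with $L\in\gen{\XX}{k}{\sCM(\Lambda)}$ and $N\in\gen{\XX}{1}{\sCM(\Lambda)}$. Because $\CM(\Lambda)$ is a Frobenius exact category, this triangle is isomorphic in $\sCM(\Lambda)$ to one arising from a short exact sequence $0\to\tilde L\to\tilde M'\to\tilde N\to 0$ in $\CM(\Lambda)$, which in turn supplies a genuine triangle in $\Db(\mod\Lambda)$. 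The induction hypothesis places $\tilde L\in\gen{\XX^\Omega}{k}{\Db(\mod\Lambda)}$ and $\tilde N\in\gen{\XX^\Omega}{1}{\Db(\mod\Lambda)}$, hence $\tilde M'\in\gen{\XX^\Omega}{k+1}{\Db(\mod\Lambda)}$; and the projective-summand argument of the base case then recovers $M$. Setting $k=n+1$ and combining with the reduction in the first paragraph gives the desired bound.

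The main obstacle is the mismatch between the two triangulated structures at play: the suspension of $\sCM(\Lambda)$ is $\Omega^{-1}$, whereas the suspension of $\Db(\mod\Lambda)$ is the ordinary shift of complexes. The delicate point is therefore the faithful translation of an abstract triangle in $\sCM(\Lambda)$ into a short exact sequence in $\CM(\Lambda)$ giving a triangle in $\Db(\mod\Lambda)$, together with the careful bookkeeping of the projective summands that appear when promoting a stable isomorphism to an actual direct-summand decomposition in $\CM(\Lambda)$. It is precisely this bookkeeping that guarantees the filtration length is preserved rather than inflated, and thereby yields exactly the factor $n+1$ from the hypothesis on $\sCM(\Lambda)$.
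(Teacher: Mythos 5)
Your proposal is correct and takes essentially the same approach as the paper: both reduce the lemma to the containment $\CM(\Lambda)\subseteq\gen{\XX^\Omega}{n+1}{\Db(\mod\Lambda)}$ via the bound $\Db(\mod\Lambda)=\gen{\CM(\Lambda)}{\max\{2,d+1\}}{}$, and both establish this by induction on the filtration length, lifting a triangle in $\sCM(\Lambda)$ to a short exact sequence in $\mod\Lambda$ after adding a projective summand and using that $\proj\Lambda\subseteq\XX^\Omega$ and that the suspension of $\sCM(\Lambda)$ is $\Omega^{-1}$. The only difference is cosmetic: the paper folds the direct-summand correction directly into the triangle as $X\to M\oplus M'\to Y\to X[1]$, whereas you isolate the stable-summand bookkeeping into a separate step.
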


\begin{proof}
In view of \eqref{CM}, it suffices to show that if $M\in\CM(\Lambda)$ belongs to $\gen{\XX}{m+1}{\sCM(\Lambda)}$, then it is in  $\gen{\XX^\Omega}{m+1}{\Db(\mod\Lambda)}$. 
We use induction on $m$. 
It is clear when $m=0$, so assume $m>0$. 
Then there is a triangle 
$$
X\to M\oplus M'\to Y\to X[1]
$$
in $\sCM(\Lambda)$ with $X\in\gen{\XX}{}{\sCM(\Lambda)}$ and $Y\in\gen{\XX}{m}{\sCM(\Lambda)}$.
This gives rise to an exact sequence
$$
0\to X\to M\oplus M'\oplus P\to Y\to0
$$
in $\mod\Lambda$ for some $P\in\proj\Lambda$. 
The basis and hypothesis of the induction show 
that $X\in\gen{\XX^\Omega}{}{\Db(\mod\Lambda)}$ and $Y\in\gen{\XX^\Omega}{m}{\Db(\mod\Lambda)}$. 
It follows that $M$ is in $\gen{\XX^\Omega}{m+1}{\Db(\mod\Lambda)}$. 
\end{proof}

The following inequality is obtained by using Lemma \ref{scm and derived}.

\begin{proposition}\label{hypersurface and derived dimension}
Let $R$ be a $d$-dimensional hypersurface. 
Then one has 
\[\frac{\tridim\Db(\mod R)+1}{\tridim\underline{\CM}(R)+1}\leq\max\{2, d+1\}.\] 
\end{proposition}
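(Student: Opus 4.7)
The plan is to combine Lemma \ref{scm and derived} with Eisenbud's $2$-periodicity of minimal free resolutions of Cohen-Macaulay modules over a hypersurface. The latter will collapse the syzygy orbit $\XX^\Omega:=\bigcup_{i\in\Z}\Omega^i\XX$ down to a single generator (up to projective summands), which is precisely what is needed to convert a bound on the stable dimension into one on the derived dimension via a single object.

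First I would set $n:=\tridim\sCM(R)$, assumed finite (otherwise the claimed inequality is vacuous). By the definition of $\tridim$ there exists $G\in\CM(R)$ realising $\sCM(R)=\gen{\add G}{n+1}{}$. Applying Lemma \ref{scm and derived} to $\XX=\add G$ yields
$$\Db(\mod R)=\gen{(\add G)^\Omega}{\max\{2,d+1\}(n+1)}{},$$
so it suffices to exhibit a single module $H\in\mod R$ with $(\add G)^\Omega\subseteq\add H$: such an $H$ immediately gives $\tridim\Db(\mod R)+1\le\max\{2,d+1\}(n+1)$, and dividing by $n+1=\tridim\sCM(R)+1$ is exactly the claim.

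The hypersurface hypothesis is used precisely here. By Eisenbud's theorem on matrix factorisations, every $M\in\CM(R)$ has a $2$-periodic minimal complete resolution, so $\Omega^{i+2}M\oplus P\cong\Omega^iM\oplus P'$ in $\mod R$ for some projectives $P,P'$ and every $i\in\Z$, where $\Omega^{-i}M$ with $i>0$ makes sense as a CM module because every CM module over the Gorenstein ring $R$ is Gorenstein projective. Iterating the periodicity, every $\Omega^iX$ with $X\in\add G$ lies in $\add(G\oplus\Omega G\oplus R)$, so taking $H:=G\oplus\Omega G\oplus R$ completes the argument. The only delicate point I foresee is the bookkeeping of the negative syzygies $\Omega^{-i}$ with $i>0$; this genuinely requires the hypersurface assumption rather than mere Iwanaga-Gorenstein-ness, since for a general Iwanaga-Gorenstein ring the syzygy functor on $\sCM(\Lambda)$ need not be periodic.
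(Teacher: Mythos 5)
Your proposal is correct and follows essentially the same route as the paper's proof: both invoke Lemma \ref{scm and derived} together with Eisenbud's two-periodicity of syzygies over a hypersurface (every $M\in\CM(R)$ satisfies $M\cong\Omega^2M$ in $\sCM(R)$) to collapse the syzygy orbit $\XX^\Omega$ into $\add(G\oplus\Omega G\oplus R)$, from which the inequality follows. Your remark on the negative syzygies $\Omega^{-i}$ simply makes explicit a point the paper leaves to the reference to Yoshino.
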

\begin{proof}
Putting $n=\tridim\sCM(R)$, we have $\sCM(R)=\gen{G}{n+1}{\sCM(R)}$ for some $G\in\CM(R)$. 
Since $R$ is a hypersurface, 
any $M\in\CM(R)$ is isomorphic to $\Omega^2 M$ in $\sCM(R)$ (cf. \cite{Y}). 
It now follows from Lemma \ref{scm and derived} 
that
$$
\Db(\mod R)=\gen{G\oplus\Omega G\oplus R}{\max\{2, d+1\}\cdot(n+1)}{\Db(\mod R)},
$$
which yields the inequality in the proposition.
\end{proof}

\begin{example}
Let $k$ be a field of characteristic zero and let $R=k[[x_0,\cdots,x_d]]/(f)$ with $f\in(x_0,\cdots,x_d)^2$. 
Then one has
$$
\tridim\Db(\mod R)\leq 2\max\{2,d+1\}\LoL(R/(\partial f))-1,
$$
where $(\partial f)=(\frac{\partial f}{\partial x_0},\cdots,\frac{\partial f}{\partial x_d})R$ is the Jacobian ideal (see also the next subsection) and $\LoL(-)$ denotes the Loewy length. 

Indeed, by \cite[Proposition 4.11]{BFK1}, 
we have $\tridim\sCM(R)\leq2\LoL(R/(\partial f))-1$. 
Hence the assertion follows from Proposition \ref{hypersurface and derived dimension}. 
\end{example}

\begin{example}
Let $R$ be a $d$-dimensional complete local hypersurface 
over an algebraically closed field of characteristic not two. 
Suppose that $R$ has countable Cohen-Macaulay representation type (i.e., there exist countably many isomorphism classes of indecomposable Cohen-Macaulay $R$-modules). 
Then there exists $G\in\CM(R)$ such that $\Db(\mod R)=\gen{G\oplus \Omega G\oplus R}{2\cdot\max\{2, d+1\}}{}$. 
Hence one has
$$
\tridim\Db(\mod R)\leq\max\{3, 2d+1\}.
$$

In fact, by \cite[Theorem 1.1]{AIT} there exists $G\in\CM(R)$ such that any Cohen-Macaulay $R$-module $X$ admits an exact sequence $0\to G_1\to X\oplus R'\to G_2\to0$ with $G_1,G_2\in\{G, \Omega G\}$ and $R'\in\proj R$. 
Therefore $\CM(R)$ is contained in $\gen{G\oplus \Omega G\oplus R}{2}{\Db(\mod R)}$, and the inequality is obtained by \eqref{CM}. 
\end{example}

\subsection{Upper bounds by Noether differents}

In the recent work \cite{sing}, Dao and Takahashi have obtained some upper bounds for the triangle dimension of the singularity category of a commutative Cohen-Macaulay local ring with an isolated singularity, by using Noether differents and Jacobian ideals.
In this subsection, we obtain similar upper bounds for the derived dimensions,
taking advantage of our inequality \eqref{CM}.

Let $(R,\mathfrak{m},k)$ be a $d$-dimensional commutative complete local ring containing a field.
Let $A$ be a Noether normalization of $R$, that is, a formal power series subring $k[[x_1,\dots,x_d]]$, 
where $x_1,\dots,x_d$ is a system of parameters of $R$.
Let $R^e=R\otimes_AR$ be the enveloping algebra of $R$ over $A$.
Define a map $\mu:R^e\to R$ by $\mu(a\otimes b)=ab$ for $a,b\in R$.
Then the ideal $\mathfrak{N}^R_A=\mu(\operatorname{Ann}_{R^e}\operatorname{Ker}\mu)$ of $R$ is called 
the {\em Noether different} of $R$ over $A$.
We denote by $\mathfrak{N}^R$ the sum of $\mathfrak{N}^R_A$, where $A$ runs through the Noether normalizations of $R$.

Cohen's structure theorem yields an isomorphism
$$
R\cong k[[x_1,\dots,x_n]]/(f_1,\dots,f_m).
$$
Identifying $R$ with this quotient ring, we define the {\em Jacobian ideal} $J$ 
as the ideal of $R$ generated by the $h$-minors of the Jacobian matrix $(\frac{\partial f_i}{\partial x_j})$, 
where $h$ is the height of the ideal $(f_1,\dots,f_m)$ of $k[[x_1,\dots,x_n]]$.

For an $\mathfrak m$-primary ideal $I$ of $R$, we denote by $\mathrm{e}(I)$ the {\em multiplicity} of $I$, 
i.e., $\mathrm{e}(I)=\lim_{n\to\infty}\frac{d!}{n^d}\ell_R(R/I^{n+1})$.
Recall that $R$ is said to be an {\em isolated singularity} if the local ring $R_{\mathfrak p}$ is regular 
for every nonmaximal prime ideal $\mathfrak p$ of $R$.

\begin{proposition}
Let $(R,\mathfrak{m},k)$ be a $d$-dimensional Cohen-Macaulay complete local ring containing a field, with $k$ infinite and perfect.
Suppose that $R$ is an isolated singularity.
Then one has inequalities
$$
\tridim\Db(\mod R)<\max\{2,d+1\}\cdot\mathrm{e}(\mathfrak{N}^R)\le\max\{2,d+1\}\cdot\mathrm{e}(J).
$$
\end{proposition}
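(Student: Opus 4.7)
The plan is to invoke Dao and Takahashi's bound on the dimension of the singularity category from \cite{sing}, transfer it to $\Db(\mod R)$ by means of inequality~\eqref{CM}, and verify the second inequality separately via multiplicity monotonicity.

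For the strict inequality, I would begin by applying the main result of \cite{sing}: under our hypotheses (Cohen-Macaulay complete local isolated singularity over an infinite perfect field) it gives
\[
\tridim\Dsg(R)+1\leq \mathrm{e}(\mathfrak{N}^R).
\]
Pick an object $G\in\CM(R)$ whose image in $\Dsg(R)$ generates it in $\mathrm{e}(\mathfrak{N}^R)$ layers, so that $\Dsg(R)=\gen{G}{\mathrm{e}(\mathfrak{N}^R)}{\Dsg(R)}$. The next step is to lift this to $\Db(\mod R)$ restricted to $\CM(R)$, i.e.\ to show
\[
\CM(R)\subseteq\gen{G\oplus R}{\mathrm{e}(\mathfrak{N}^R)}{\Db(\mod R)}.
\]
The idea is: for $M\in\CM(R)$, the triangles in $\Dsg(R)=\Db(\mod R)/\K^{\mathrm{b}}(\proj R)$ witnessing $M\in\gen{G}{\mathrm{e}(\mathfrak{N}^R)}{\Dsg(R)}$ lift to triangles in $\Db(\mod R)$ after adding suitable summands that are perfect complexes (hence lie in $\thick R$). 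This is a routine induction on the number of layers, mirroring the proof of Lemma~\ref{scm and derived} but using the Verdier quotient description of $\Dsg(R)$ in place of a Frobenius structure on $\CM(R)$. I would then combine this inclusion with \eqref{CM}, namely $\Db(\mod R)=\gen{\CM(R)}{\max\{2,d+1\}}{\Db(\mod R)}$. Associativity of the $*$-product yields
\[
\Db(\mod R)=\gen{G\oplus R}{\max\{2,d+1\}\cdot\mathrm{e}(\mathfrak{N}^R)}{\Db(\mod R)},
\]
so $\tridim\Db(\mod R)+1\leq \max\{2,d+1\}\cdot\mathrm{e}(\mathfrak{N}^R)$, which is the desired strict inequality.

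For the inequality $\mathrm{e}(\mathfrak{N}^R)\leq \mathrm{e}(J)$ between multiplicities, I would invoke the classical containment $J\subseteq\mathfrak{N}^R$ of the Jacobian ideal inside the Noether different, valid in our setting (complete local rings containing a perfect field; cf.\ Scheja-Storch). Since $R$ is an isolated singularity, both ideals are $\mathfrak{m}$-primary, so that the Hilbert-Samuel multiplicity is defined and is antitone in $\mathfrak{m}$-primary ideals under inclusion. The inclusion $J\subseteq\mathfrak{N}^R$ then immediately yields $\mathrm{e}(\mathfrak{N}^R)\leq\mathrm{e}(J)$.

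The main obstacle is the lifting step in the first inequality. In the Gorenstein case, Lemma~\ref{scm and derived} carries out precisely this passage, and the factor $\max\{2,d+1\}$ appears there through the use of \eqref{CM}. In the Cohen-Macaulay generality assumed here, $\CM(R)$ is no longer Frobenius and $\sCM(R)$ is not naturally triangulated, so one must work with the Verdier quotient $\Dsg(R)$ directly, lift each triangle in it to $\Db(\mod R)$ up to a summand in $\thick R$, and only then appeal to \eqref{CM}. Once that passage is established, the quantitative bound follows from the standard length-composition formula for the $*$-product.
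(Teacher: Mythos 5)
Your argument for the second inequality is essentially identical to the paper's: both rest on the containment $J\subseteq\mathfrak{N}^R$ together with the fact that both ideals are $\mathfrak m$-primary when $R$ is an isolated singularity, and then use monotonicity of Hilbert--Samuel multiplicity under inclusion. (The paper sources the containment from Wang and Yoshino rather than Scheja--Storch, but that is a bibliographic, not mathematical, difference.)

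For the first inequality your route is genuinely different from the paper's, and it contains a gap precisely at the step you flag as "the main obstacle." You propose to take the Dao--Takahashi bound $\tridim\Dsg(R)+1\le\mathrm{e}(\mathfrak{N}^R)$ and then lift generation in $\Dsg(R)=\Db(\mod R)/\K^{\mathrm{b}}(\proj R)$ to generation of $\CM(R)$ inside $\Db(\mod R)$, claiming that the inclusion $\CM(R)\subseteq\gen{G\oplus R}{\mathrm{e}(\mathfrak{N}^R)}{\Db(\mod R)}$ follows by a "routine induction mirroring Lemma~\ref{scm and derived}." This is where the argument breaks down. Lemma~\ref{scm and derived} works because in the Gorenstein case a triangle in $\sCM(\Lambda)$ between Cohen--Macaulay modules can be realized as a genuine short exact sequence in $\mod\Lambda$ after adding a projective summand to the middle term, so the lift is under tight control. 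In the non-Gorenstein Cohen--Macaulay case there is no Frobenius structure, $\sCM(R)$ is not triangulated, and the only available model is the Verdier quotient. Triangles in a Verdier quotient do not lift to triangles with controlled modification: if $Q(X)\to Q(M)\to Q(Y)\to$ is a triangle in $\Dsg(R)$, one can find a triangle $X'\to M'\to Y'\to$ in $\Db(\mod R)$ with $Q(M')\cong Q(M)$, but $Q(M')\cong Q(M)$ only means $M$ and $M'$ are connected by a zigzag of morphisms with perfect cones, not that $M$ is a direct summand of $M'\oplus(\text{perfect})$. The perfect complexes accumulated through the zigzags are not bounded by a single term $R$ in the ghost-layer count, since $\thick(R)$ is not contained in $\gen{R}{c}{}$ for any fixed $c$. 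Consequently the claimed inclusion $\CM(R)\subseteq\gen{G\oplus R}{\mathrm{e}(\mathfrak{N}^R)}{\Db(\mod R)}$ does not follow, and the composition with \eqref{CM} cannot be carried out as stated.

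The paper circumvents the lifting problem entirely: it never passes through $\Dsg(R)$. Instead it chooses a parameter ideal $Q\subseteq\mathfrak{N}^R$ with $\ell_R(R/Q)=\mathrm{e}(\mathfrak{N}^R)$, uses Wang's annihilation result $Q\Ext_R^i(M,\Omega^jM)=0$ for $1\le i,j\le d$ to conclude (via \cite[Proposition 2.2]{stcm}) that every $M\in\CM(R)$ is a direct summand of $\Omega^d(M/QM)$, filters $M/QM$ along a composition series of $R/Q$ of length $\ell_R(R/Q)$, and takes $d$-th syzygies to land in $\gen{\Omega^dk}{\ell_R(R/Q)}{\Db(\mod R)}$. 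This is an argument carried out entirely within $\Db(\mod R)$ with the explicit generator $\Omega^dk$, and it is the combination of \emph{this} inclusion with \eqref{CM} that yields the stated bound. If you want to salvage your approach, you would need to replace the cited conclusion of \cite{sing} with the underlying module-theoretic lemma from that paper and redo the filtration argument in $\Db(\mod R)$ rather than $\Dsg(R)$ --- which is what the paper does.
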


\begin{proof}
We see from \cite[Lemmas 4.3, 5.8 and Propositions 4.4, 4.5]{Wang} and \cite[Lemma (6.12)]{Y} 
that $\mathfrak{N}^R$ and $J$ are $\mathfrak m$-primary ideals with $J\subseteq\mathfrak{N}^R$.
The second inequality immediately follows from this.

Let us prove the first inequality.
Since $k$ is an infinite field, we can choose a parameter ideal $Q$ of $R$ contained in $\mathfrak{N}^R$ 
such that $\mathrm{e}(\mathfrak{N}^R)=\ell_R(R/Q)$; see \cite[Corollary 4.6.10 and Theorems 4.7.6, 4.7.10]{BH}.
Let $M$ be a Cohen-Macaulay $R$-module.
Using \cite[Corollary 5.13]{Wang}, we have $Q\mathrm{Ext}_R^i(M,\Omega^jM)=0$ for $1\le i,j\le d$.
By \cite[Proposition 2.2]{stcm} the module $M$ is isomorphic to a direct summand of $\Omega^d(M/QM)$.
Note that $R/Q$ is an artinian ring.
Consider the composition series of $R/Q$:
$$
R/Q=I_0\supsetneq I_1\supsetneq \cdots \supsetneq I_l=0,
$$
where $l=\ell_R(R/Q)$.
There is a filtration
$$
M/QM=I_0(M/QM)\supseteq I_1(M/QM)\supseteq \cdots \supseteq I_l(M/QM)=0
$$
whose subquotients are $k$-vector spaces.
Decomposing this into short exact sequences and taking the $d$-th syzygies, 
we observe that $M$ belongs to $\langle\Omega^dk\rangle_l^{\mathsf{D}^b(\operatorname{mod}R)}$.
Hence we obtain $\mathrm{CM}(R)\subseteq\langle\Omega^dk\rangle_l^{\mathsf{D}^b(\operatorname{mod}R)}$, 
and \eqref{CM} yields equalities $\mathsf{D}^b(\operatorname{mod}R)
=\langle\Omega^dk\rangle_{\max\{2,d+1\}\cdot l}=\langle\Omega^dk\rangle_{\max\{2,d+1\}\cdot\mathrm{e}(\mathfrak{N}^R)}$.
\end{proof}



\begin{thebibliography}{99}
\bibitem{AT}
{\sc T. Aihara; R. Takahashi}, Generators and dimensions of derived categories, preprint (2011), \texttt{arxiv:1106.0205}.
\bibitem{AIT}
{\sc T. Araya; K.-i. Iima; R. Takahashi}, 
On the structure of Cohen-Macaulay modules over hypersurfaces of countable Cohen-Macaulay representation type, {\em J. Algebra} (to appear).
\bibitem{AH}
{\sc J. Asadollahi; R. Hafezi}, On the derived dimension of abelian categories, preprint (2012), \texttt{arxiv:1201.0479}.
\bibitem{ASS}
{\sc I. Assem; D. Simson; A. Skowro\'nski}, Elements of the Representation Theory of Associative Algebras 1: Techniques of Representation Theory, London Mathematical Society Student Texts 65, {\em Cambridge University Press, Cambridge}, 2006.
\bibitem{A1} 
{\sc M. Auslander}, Coherent functors, 1966 Proc. Conf. Categorical Algebra (La Jolla, Calif., 1965) pp. 189--231 Springer, New York.
\bibitem{A}
{\sc M. Auslander}, Functors and morphisms determined by objects, {\em Representation theory of algebras (Proc. Conf., Temple Univ., Philadelphia, Pa., 1976)}, pp. 1--244, {\em Lecture Notes in Pure Appl. Math., Vol. 37, Dekker, New York}, 1978.
\bibitem{AB}
{\sc M. Auslander; R.-O. Buchweitz}, The homological theory of maximal Cohen-Macaulay approximations, Colloque en l'honneur de Pierre Samuel (Orsay, 1987). Mem. Soc. Math. France (N.S.) No. 38 (1989), 5--37.
\bibitem{AM} 
{\sc M. Auslander; I. Reiten}, Applications of contravariantly finite subcategories, {\em Adv. Math.} {\bf 86} (1991), no. 1, 111--152.
\bibitem{AR}
{\sc M. Auslander; I. Reiten}, Representation theory of artin algebras. I, II, III, IV, {\em Comm. Algebra} {\bf 1} (1974), 177--268; {\em ibid.} 269--310; {\em ibid.} {\bf 3} (1975), 239--294; {\em ibid.} {\bf 5} (1977), no. 5, 443--518.
\bibitem{AR2}
{\sc M. Auslander; I. Reiten}, Cohen-Macaulay and Gorenstein Artin algebras, {\em Representation theory of finite groups and finite-dimensional algebras (Bielefeld, 1991)}, 221--245, {\em Progr. Math.}, 95, {\em Birkhauser, Basel}, 1991.
\bibitem{AS}
{\sc M. Auslander; S. O. Smal\o}, Almost split sequences in subcategories, {\em J. Algebra} {\bf 69} (1981) 426--454.
\bibitem{ABIM}
{\sc L. L. Avramov; R.-O. Buchweitz; S. B. Iyengar; C. Miller}, Homology of perfect complexes, {\em Adv. Math.} {\bf 223} (2010), no. 5, 1731--1781.
\bibitem{BFK1}
{\sc M. Ballard; D. Favero; L. Katzarkov}, Orlov spectra: bounds and gaps, {\em Invent. Math.} (to appear).
\bibitem{BIKO}
{\sc P. A. Bergh; S. B. Iyengar; H. Krause; S. Oppermann}, Dimensions of triangulated categories via Koszul objects, {\it Math. Z.} {\bf 265} (2010), no. 4, 849--864.
\bibitem{BV}
{\sc A. Bondal; M. van den Bergh}, Generators and representability of functors in commutative and noncommutative geometry, {\em Mosc. Math. J.} {\bf 3} (2003), no. 1, 1--36, 258.
\bibitem{BH}
{\sc W. Bruns; J. Herzog}, Cohen-Macaulay rings, revised edition, Cambridge Studies in Advanced Mathematics, 39, {\it Cambridge University Press, Cambridge}, 1998.
\bibitem{B}
{\sc R.-O. Buchweitz}, Maximal Cohen-Macaulay modules and Tate-cohomology over Gorenstein rings, unpublished paper (1986), \texttt{http://hdl.handle.net/1807/16682}.
\bibitem{CYZ}
{\sc X.-W. Chen; Y. Ye; P. Zhang}, Algebras of derived dimension zero, {\it Comm. Algebra} {\bf 36} (2008), no. 1, 1--10.
\bibitem{CR}
{\sc C. W. Curtis; I. Reiner}, Methods of representation theory, Vol. I, With applications to finite groups and orders, Reprint of the 1981 original, Wiley Classics Library, A Wiley-Interscience Publication, {\em John Wiley \& Sons, Inc., New York}, 1990.
\bibitem{sing}
{\sc H. Dao; R. Takahashi}, Upper bounds for dimensions of singularity categories, preprint (2012), \texttt{arXiv:1203.1683}.
\bibitem{Han} 
{\sc Y. Han}, Derived dimensions of representation-finite algebras, Preprint (2009), \texttt{arxiv:0909.0330}.
\bibitem{H}
{\sc D. Happel}, Triangulated categories in the representation theory of finite-dimensional algebras, London Mathematical Society Lecture Note Series, 119, {\em Cambridge University Press, Cambridge}, 1988.
\bibitem{HIO}
{\sc M. Herschend; O. Iyama; S. Oppermann}, $n$-representation infinite algebras, in preparation.
\bibitem{I1} 
{\sc O. Iyama}, Higher-dimensional Auslander-Reiten theory on maximal orthogonal subcategories, 
{\em Adv. Math.} {\bf 210} (2007), no. 1, 22--50.
\bibitem{I2}
{\sc O. Iyama}, Auslander correspondence, {\em Adv. Math.} {\bf 210} (2007) 51--82.
\bibitem{Iwana}
{\sc Y. Iwanaga}, On rings with finite self-injective dimension, {\em Comm. Algebra} {\bf 7} (1979), no. 4, 393--414.
\bibitem{KK}
{\sc H. Krause; D. Kussin}, Rouquier's theorem on representation dimension, 
Trends in representation theory of algebras and related topics, 95--103, 
Contemp. Math., 406, 
{\em Amer. Math. Soc., Providence, RI}, 2006.
\bibitem{Mi}
{\sc Y. Miyashita}, Tilting modules of finite projective dimension, 
{\it Math. Z.} {\bf 193} (1986), no. 1, 113--146.
\bibitem{O3}
{\sc S. Oppermann}, Lower bounds for Auslander's representation dimension, 
{\it Duke Math. J.} {\bf 148} (2009), no. 2, 211--249.
\bibitem{Ri} 
{\sc J. Rickard}, Morita theory for derived categories, 
{\em J. London Math. Soc.} (2) {\bf 39} (1989), no. 3, 436--456.
\bibitem{R1}
{\sc R. Rouquier}, Representation dimension of exterior algebras, {\em Invent. Math.} {\bf 165} (2006), no. 2, 357--367.
\bibitem{R2}
{\sc R. Rouquier}, Dimensions of triangulated categories, 
{\em J. K-Theory} {\bf 1} (2008), 193--256.
\bibitem{stcm}
{\sc R. Takahashi}, Classifying thick subcategories of the stable category of Cohen-Macaulay modules, 
{\it Adv. Math.} {\bf 225} (2010), no. 4, 2076--2116.
\bibitem{Wang}
{\sc H.-J. Wang}, On the Fitting ideals in free resolutions, 
{\em Michigan Math. J.} {\bf 41} (1994), no. 3, 587--608.
\bibitem{Y}
{\sc Y. Yoshino}, Cohen-Macaulay modules over Cohen-Macaulay rings, 
London Mathematical Society Lecture Note Series, 146, {\em Cambridge University Press, Cambridge}, 1990.
\bibitem{Yoshiw}
{\sc M. Yoshiwaki}, On self-injective algebras of stable dimension zero, {\em Nagoya Math. J.} {\bf 203} (2011), 101--108.
\end{thebibliography}
\end{document}